\newtheorem{Theorem}{Theorem}[section]
\newtheorem{Lemma}[Theorem]{Lemma}
\newtheorem{Corollary}[Theorem]{Corollary}
\newtheorem{Proposition}[Theorem]{Proposition}
\newtheorem{Question}[Theorem]{Question}
\newtheorem{Conj}[Theorem]{Conjecture}
\def\qed{\ifhmode\textqed\fi
	\ifmmode\ifinner\quad\qedsymbol\else\dispqed\fi\fi}
\def\textqed{\unskip\nobreak\penalty50
	\hskip2em\hbox{}\nobreak\hfill\qedsymbol
	\parfillskip=0pt \finalhyphendemerits=0}
\def\dispqed{\rlap{\qquad\qedsymbol}}
\def\Ker{\textup{Ker}}
\def\m{\mathfrak{m}}
\def\Ass{\textup{Ass}}
\def\depth{\textup{depth\,}}
\def\lcm{\textup{lcm}}
\def\reg{\textup{reg}}
\def\lex{\textup{lex}}
\def\ini{\textup{in}}
\def\rank{\textup{rank}}
\def\set{\textup{set}}
\def\dstab{\textup{dstab}}
\def\mat{\textup{mat}}
\begin{document}
	
	\title{Rees algebras of complementary edge ideals}
	\author{Antonino Ficarra, Somayeh Moradi}
	
	\address{Antonino Ficarra, BCAM -- Basque Center for Applied Mathematics, Mazarredo 14, 48009 Bilbao, Basque Country -- Spain, Ikerbasque, Basque Foundation for Science, Plaza Euskadi 5, 48009 Bilbao, Basque Country -- Spain}
	\email{aficarra@bcamath.org,\,\,\,\,\,\,\,\,\,\,\,\,\,antficarra@unime.it}
	
	\address{Somayeh Moradi, Department of Mathematics, Faculty of Science, Ilam University, P.O.Box 69315-516, Ilam, Iran}
	\email{so.moradi@ilam.ac.ir}
	
	\subjclass[2020]{Primary 13D02, 13C05, 13A02; Secondary 05E40}
	\keywords{Complementary edge ideals, Rees algebra, Koszulness, Limit depth}
	
	\begin{abstract}
		In this paper we investigate the Rees algebras of squarefree monomial ideals $I \subset S=K[x_1,\dots,x_n]$ generated in degree $n-2$, where $K$ is a field. Every such ideal arises as the complementary edge ideal $I_c(G)$ of a finite simple graph $G$. We describe the defining equations of the Rees algebra $\mathcal{R}(I_c(G))$ in terms of the combinatorics of $G$.  If $G$ is a tree or a unicyclic graph whose unique induced cycle has length $3$ or $4$, we prove that $\mathcal{R}(I_c(G))$ is Koszul.   We also determine the asymptotic depth of the powers of $I_c(G)$, proving that $\lim_{k \to \infty}\text{depth}\, S/I_c(G)^k=b(G)$, where $b(G)$ is the number of bipartite connected components of $G$. Finally, we show that the index of depth stability of $I_c(G)$ is at most $n-2$, and equality holds when $G$ is a path graph.
	\end{abstract}
	
	\maketitle\vspace*{-2.1em}
	\section*{Introduction}
	
	Let $I$ be a squarefree monomial ideal generated in degree $d$ in the polynomial ring $S=K[x_1,\dots,x_n]$ over a field $K$. A famous theorem of Herzog, Hibi and Zheng \cite{HHZ} (see, also, \cite{FShort}) guarantees that if $I$ has a $2$-linear resolution, then $I^k$ has a $2k$-linear resolution for all $k\ge1$. Examples of Terai and Sturmfels show that in general this property does not hold in degree $d=3$. In \cite{FM}, we investigated for which degrees $d$ an analogue of the Herzog-Hibi-Zheng theorem holds, and it turned out that this question has a positive answer precisely for $d\in\{0,1,2,n-2,n-1,n\}$. Besides the case $d=2$ already addressed in \cite{HHZ}, and the cases $d\in\{0,1,n-1,n\}$ which are trivial, the case $d=n-2$ stands out. When $d=n-2$, each minimal monomial generator of $I$ is of the form $(x_1\cdots x_n)/(x_ix_j)$ for some $i\ne j$. This observation naturally leads to the concept of \textit{complementary edge ideal} \cite{FM1}, introduced independently in \cite{HQS}.
	
	Let $G$ be a finite simple graph on the vertex set $V(G)=[n]=\{1,2,\dots,n\}$ and with the edge set $E(G)$. The \textit{complementary edge ideal} of $G$ is defined as
	$$
	I_c(G)\ =\ ((x_1\cdots x_n)/(x_ix_j):\ \{i,j\}\in E(G)).
	$$
	
	Any squarefree monomial ideal $I\subset S$ generated in degree $n-2$ is the complementary edge ideal of some graph $G$ on the vertex set $V(G)=[n]$. More generally, the concept of complementary ideal of a squarefree monomial ideal was first considered by Villarreal in \cite{RV2}, 
    and later was extended for arbitrary monomial ideals in \cite{ALS}. 
	
	Let $c(G)$ be the number of connected components of $G$ having at least two vertices. In \cite[Theorem B]{FM}, we proved that $I_c(G)$ has linear resolution, if and only if, $I_c(G)^k$ has a linear resolution for all $k\ge1$, if and only if, $c(G)=1$. To establish this result, we briefly investigated the structure of the Rees algebra of $I_c(G)$,
	$$
	\mathcal{R}(I_c(G))\ =\ \bigoplus_{k\ge0}I_c(G)^kt^k.
	$$
	
	Our goal in this paper is to systematically study the Rees algebra of a complementary edge ideal $I_c(G)$ in terms of the combinatorics of $G$.   \smallskip
	
	In Section \ref{sec1}, we describe the defining equations of the Rees algebra of $I_c(G)$ in terms of the even closed walks of another graph $G^*$. The graph $G^*$ is obtained from the graph $G$ on vertex set $V(G)=[n]$ by adjoining a new vertex $n+1$ and connecting it to all vertices of $G$. In Theorem \ref{Thm:degx}, we prove that the $x$-degree of any primitive binomial relation of $\mathcal{R}(I_c(G))$ is at most 2. Using this result and the computation of the function $k\mapsto\reg\,I_c(G)^k$ accomplished in \cite[Theorem 4.1]{FM1}, in Theorem \ref{Thm:x-reg-IcG} we prove that the $x$-regularity of $\mathcal{R}(I_c(G))$ satisfies the inequalities
	$$
	c(G)-1\ \le\ \reg_x\mathcal{R}(I_c(G))\ \le\ |V(G)|-1.
	$$
	Moreover, in Corollary \ref{bound-reg} we prove the inequality $\reg\,\mathcal{R}(I_c(G))\le|V(G)|$ for trees and connected unicyclic graphs with unique cycle of length either $3$ or $4$. Whether this inequality holds in general is an open question at the moment (Question \ref{Quest:reg-R-I-c-G}).\smallskip
	
	In Section \ref{sec2}, we consider the problem of characterizing when $\mathcal{R}(I_c(G))$ has a quadratic Gr\"obner basis and when $\mathcal{R}(I_c(G))$ is a Koszul algebra. The latter problem appears to be very difficult. For instance if $G$ is a complete graph and we remove from $G$ just one edge, then $\mathcal{R}(I_c(G))$ is Koszul. In Theorem \ref{Koszul} we give necessary conditions for the Koszulness of $\mathcal{R}(I_c(G))$. We prove that $\mathcal{R}(I_c(G))$ has a quadratic Gr\"obner basis, and hence is a Koszul ring, if $G$ is a tree (Theorem \ref{tree}) or a connected unicyclic graph whose unique induced cycle has length 3 or 4 (Theorem \ref{unicylic}(b)).  \smallskip
	
	In Section \ref{sec3}, collecting results of Villarreal \cite{RV}, Hibi and Ohsugi \cite{HO}, and Ansaldi, Lin and Shen \cite{ALS}, in Theorem \ref{Thm:Fiber-IcG} we see that the Rees algebra of the edge ideal $I(G)$ is normal, if and only if, $\mathcal{R}(I_c(G))$ is normal, if and only if, $G$ satisfies the odd cycle condition. Let $b(G)$ be the number of bipartite connected components of $G$. Here, we regard an isolated vertex of $G$ as a bipartite connected component of $G$. Combining Theorem \ref{Thm:Fiber-IcG}, \cite[Theorem 3.1]{ALS} and \cite[Lemma 10.2.6]{RV}, it follows immediately that the analytic spread $\ell(I_c(G))$ of $I_c(G)$ is $|V(G)|-b(G)$. We prove this directly and independently using linear algebra.\smallskip
	
	By Brodmann \cite{Br}, the limit $\lim_{k\rightarrow\infty}\depth S/I^k$ exists for any ideal $I\subset S$. The least integer $k_0>0$ such that $\depth\,S/I^k=\depth\,S/I^{k_0}$ for all $k\ge k_0$ is called the \textit{index of depth stability} of $I$ and is denoted by $\dstab\,I$. By \cite[Proposition 10.3.2]{HHBook} and Corollary \ref{Cor:ell-I(G)}, we have $\lim_{k\rightarrow\infty}\depth S/I_c(G)^k\le|V(G)|-\ell(I_c(G))=b(G)$ and equality holds if $\mathcal{R}(I_c(G))$ is Cohen-Macaulay. Surprisingly, we prove in Theorem \ref{Thm:limitdepth-Ic(G)} that $\lim_{k\rightarrow\infty}\depth S/I_c(G)^k=b(G)$ for any graph $G$, and $\dstab\,I_c(G)\le|V(G)|-2$. In Proposition \ref{Prop:Pn}, we prove that this bound for the index of depth stability of $I_c(G)$ is sharp. The precise values of the depth function $k\mapsto\depth\,S/I_c(G)^k$ remain unknown for $1\le k<|V(G)|-2$. It would be also nice to have a precise formula for $\dstab\,I_c(G)$. In the case that $G$ is a tree, experimental evidence suggests that $\dstab\,I_c(G)$ is the length of the longest induced path of $G$ minus two.\smallskip
	
	In view of the results in this paper, and several experimental evidence, we expect that $\mathcal{R}(I_c(G))$ is a Cohen-Macaulay ring for any graph $G$ (Conjecture \ref{Conj:R(Ic(G))-CM}).
	
	\section{The defining equations of $\mathcal{R}(I_c(G))$}\label{sec1}
	
	In this section we study the defining ideal of the Rees algebra $\mathcal{R}(I_c(G))$. In \cite{V}, the defining ideal of $\mathcal{R}(I(G))$ is described in terms of the syzygies of $I$ and the defining ideal of the edge ring $K[G]$. To this aim, the concept of an even closed walk in the graph $G$ played a crucial role. Since defining equations  of $\mathcal{R}(I(G))$ and $\mathcal{R}(I_c(G))$ are closely related, below we will use the correspondence between even closed walks and the defining equations of $\mathcal{R}(I(G))$  to study the defining equations of $\mathcal{R}(I_c(G))$.
    
    We fix the following notation, which we will use throughout this and the next section. For a monomial ideal $I\subset S$, let $\mathcal{G}(I)$ be the minimal monomial generating set of $I$. Given $A\subset[n]=\{1,\dots,n\}$, we put ${\bf x}_A=\prod_{i\in A}x_i$, and we set ${\bf x}_\emptyset=1$.\smallskip
	
	Let $G$ be a finite simple graph with $V(G)=[n]$ and $E(G)=\{e_1,\ldots,e_m\}$. For all $i=1,\dots,m$, we set $u_i={\bf x}_{[n]}/{\bf x}_{e_i}$. Then $\mathcal{G}(I_c(G))=\{u_1,\dots,u_m\}$. Set $I=I_c(G)$. Let $T=S[y_1,\dots,y_m]$ be a polynomial ring and let $\varphi:T\rightarrow\mathcal{R}(I)$ be the $S$-algebra homomorphism defined by $\varphi(y_i)=u_it$ for $i=1,\dots,m$. We set $J=\Ker\,\varphi$. 
	
    Moreover, let $I(G)=(x_ix_j:\{i,j\}\in E(G))$ be the edge ideal of $G$, let $T'=S[z_1,\dots,z_m]$ be a polynomial ring, let  $\varphi':T'\rightarrow\mathcal{R}(I(G))$ be the $S$-algebra homomorphism defined by $\varphi'(z_i)={\bf x}_{e_i}t$ for all $i=1,\dots,m$, and let $J'=\Ker\,\varphi'$. It is easily seen that any binomial relation $h=uy_{i_1}\cdots y_{i_k}-vy_{j_1}\cdots y_{j_k}\in J$ corresponds to a binomial relation $h'=uz_{j_1}\cdots z_{j_k}-vz_{i_1}\cdots z_{i_k}\in J'$, and vice versa.
	The Rees algebra $\mathcal{R}(I(G))$ is isomorphic to the edge ring $$K[G^*]=K[{\bf x}_e:\, e\in E(G^*)],$$ where $G^*$ is the graph obtained from $G$ by adding a new vertex $n+1$ to $G$ and connecting it to all  vertices of $G$ . So the relation $h'$ and hence $h$ corresponds to an even closed walk in $G^*$. Moreover, if $h$ belongs to a reduced Gröbner basis of $J$, then $h$ and hence $h'$ are primitive binomials, which implies that the corresponding even closed walk in $G^*$ is primitive, see \cite[Corollary 10.1.5]{HHBook}, \cite[Proposition 3.1]{V} and \cite[Lemma 10.1.9]{RV}. Any fiber relation $h=y_1^{a_1}\cdots y_m^{a_m}-y_1^{b_1}\cdots y_m^{b_m}$ in a reduced Gröbner basis of $J$ comes from a fiber relation $h'$ of $J'$. By \cite[Lemma 10.1.9]{RV}, $h$ and hence $h'$ is a primitive 
	binomial. So by \cite[Proposition 10.1.8]{RV}, we have $a_i\leq 2$ and $b_i\leq 2$ for all $i$. In Theorem \ref{Thm:degx}, we give some bound for the $x$-degree of binomials in a reduced Gröbner basis of $J$.   
    
	Since an isolated vertex of $G$ is of degree one in $G^*$, it does not belong to an even closed walk in $G^*$. Hence, removing isolated vertices from a graph $G$ does not change the ideals $J'$ and $J$. So in order to study the defining ideal $J$, we may assume that $G$ has no isolated vertices.
	
	\begin{Theorem}\label{Thm:degx}
		Let $G$ be a finite simple graph. Then there is a monomial order $<$ on $T$ such that a reduced Gröbner basis of $J$ with respect to $<$ consists of binomials of the form $f=uy_{i_1}\cdots y_{i_k}-vy_{j_1}\cdots y_{j_k}$, where $u$ and $v$ are monomials in $S$ of degree at most $2$
	\end{Theorem}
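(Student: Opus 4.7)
The plan is to work through the correspondence $h\leftrightarrow h'$ between binomials of $J$ and $J'$ that the excerpt records, and which preserves the $x$-part of each monomial. I would fix any monomial order $<$ on $T$ (for instance, a lexicographic one). Since $J$ is a toric ideal, every element of its reduced Gr\"obner basis is a primitive binomial, hence, via the correspondence, gives a primitive binomial of $J'$. By the dictionary between $J'$ and the edge ring $K[G^*]$ already invoked in the excerpt, each primitive binomial of $J'$ corresponds to a primitive even closed walk $W$ in $G^*$, under the identification in which the variable $x_i$ plays the role of the edge $\{i,n+1\}$ of $G^*$.

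The theorem then reduces to the following combinatorial claim: \emph{every primitive even closed walk $W$ in $G^*$ visits the apex vertex $n+1$ at most twice}. To see why this suffices, observe that the two edges of $W$ incident to a visit at position $i$ are $e_i$ and $e_{i+1}$, which sit at positions of opposite parity in $W$; consequently each visit to $n+1$ contributes exactly one edge to each of the two alternate edge multisets of $W$. Hence both $x$-degrees $\deg(u)$ and $\deg(v)$ of the associated binomial $h$ agree with the number of visits of $W$ to $n+1$, which is at most $2$ by the claim.

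The plan for the claim is by contradiction: I would assume $W$ visits $n+1$ at three distinct positions $p_1<p_2<p_3$, which cyclically splits $W$ into three closed subwalks $W_1,W_2,W_3$ at $n+1$ with lengths $\ell_1,\ell_2,\ell_3$ satisfying $\ell_1+\ell_2+\ell_3=2q$. Since the sum is even, either all three $\ell_i$ are even, or exactly two of them are odd; in the latter case cyclic adjacency lets me concatenate the two odd subwalks into a single even closed walk at $n+1$. In either case $W$ decomposes into at least two proper even closed subwalks based at $n+1$. Each proper subwalk $W'$ determines a binomial $h_{W'}\in J'$ whose two monomials respectively divide those of $h'$, producing a proper binomial divisor of $h'$ and contradicting the primitivity of $h'$. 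The main obstacle here is to exclude the degenerate possibility that some $h_{W'}$ is the zero binomial: any such vanishing would exhibit a common factor of the two monomials of $h'$, which is ruled out by the coprimality of those monomials built into the definition of primitivity. Once this safeguard is verified, the claim, and hence the theorem, follows.
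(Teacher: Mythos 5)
Your proposal is correct and takes essentially the same route as the paper: pass to primitive binomials (reduced Gr\"obner basis elements of the toric ideal $J$, hence of $J'$), identify them with primitive even closed walks in $G^*$, and use the parity of the segments between consecutive visits to the apex $n+1$ to show such a walk meets $n+1$ at most twice, so that $\deg(u)=\deg(v)$ equals the number of visits and is at most $2$. The only cosmetic difference is that the paper fixes a particular lexicographic order coming from a careful vertex labeling (which it exploits to get $\deg(u)\le 1$ in the connected case and reuses later), whereas your argument applies verbatim to any monomial order, since primitivity of reduced Gr\"obner basis elements of a toric ideal is order-independent.
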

	\begin{proof}
    By the discussion prior to the theorem, we may assume that $G$ has no isolated vertices. Let $G_1,\ldots,G_r$ be the connected components of $G$. We fix a labeling on $V(G)$ as follows. Let $n_i=|V(G_i)|$ for all $i$. We label $V(G_1)$ by $1,\ldots,n_1$ such that $G_1\setminus\{1,\ldots,s\}$ is connected for all $s<n_1$. Such a labeling exists, as $G_1$ is connected (see the proof of \cite[Theorem 3.1(a)]{FM}). Suppose that $V(G_{i-1})$ is labeled. Next, we label $V(G_i)$ by $(n_1+\dots+n_{i-1}+1)$, $\ldots$, $(n_1+\dots+n_{i-1}+n_i)$ such that $G_i\setminus \{(n_1+\dots+n_{i-1}+1),\ldots, (n_1+\dots+n_{i-1}+s)\}$ is connected for all $s<n_i$. 
		
		Fix the lexicographic order $<$ on $T$ induced by $x_1>\dots>x_n>y_1>\dots>y_m$.
		Consider a minimal monomial generator $uy_{i_1}\cdots y_{i_k}\in \ini_<(J)$, where $u\in S$ is a monomial. We show that $\deg(u)\leq 2$. If $r=1$, then $G$ is a connected graph. Hence, by the proof of \cite[Theorem 3.1]{FM1}, we conclude that $\deg(u)\leq 1$. So in this case we are done. Now, assume that $r\geq 2$. Suppose that $\deg(u)\geq 2$. We prove that $\deg(u)=2$. 
		There exists a binomial $h=uy_{i_1}\cdots y_{i_k}-vy_{j_1}\cdots y_{j_k}\in J$ with $\ini_<(h)=uy_{i_1}\cdots y_{i_k}$, such that $v\in S$ is a monomial. By \cite[Theorem 3.13]{HHBook2} we may assume that $h$ is a primitive binomial. Then $h'=uz_{j_1}\cdots z_{j_k}-vz_{i_1}\cdots z_{i_k}\in J'$ is a primitive binomial, which corresponds to a primitive even closed walk in $G^*$, say $W$. Since $\deg(u)\geq 2$, $W$ passes the vertex $n+1$ at least two times.
		We may write $W$ as $$n+1,q_1,q_2,\ldots,q_d,n+1,q_{d+1},\ldots,q_{d+s},n+1,\ldots$$ where $q_i$'s are vertices in $G$. 
		Since $W$ is primitive, $d$ and $s$ are even numbers. Otherwise $W$ has a proper even closed subwalk, which contradicts to $W$ being primitive. So we obtain that $W': n+1,q_1,q_2,\ldots,q_d,n+1,q_{d+1},\ldots,q_{d+s},n+1$ is an even closed subwalk of $W$. Since $W$ is primitive, this implies that $W=W'$. Therefore, $W$ passes the vertex $n+1$ precisely two times. Hence, $\deg(u)=\deg(v)=2$.    
	\end{proof}
	
	As a consequence, we then have

	\begin{Theorem}\label{Thm:x-reg-IcG}
		Let $G$ be a finite simple graph on $n$ vertices. Then
		$$
		c(G)-1\ \le\ \reg_{x}\mathcal{R}(I_c(G))\ \le\ n-1.
		$$
	\end{Theorem}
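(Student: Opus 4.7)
The plan is to reduce both inequalities to the study of the regularity function $k\mapsto \reg(I_c(G)^k)$ by invoking the standard identity, valid for any ideal $I\subset S$ generated in a single degree $d$,
$$\reg_x \mathcal{R}(I)\;=\;\sup_{k\ge 1}\bigl(\reg(I^k)-kd\bigr),$$
with the supremum attained for all sufficiently large $k$ by the asymptotic linearity of $\reg(I^k)$ (Kodiyalam, Cutkosky--Herzog--Trung). Applying this with $I=I_c(G)$ and $d=n-2$, both inequalities of the theorem translate into matching bounds on $\reg(I_c(G)^k)-k(n-2)$.

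These bounds follow from the explicit computation of $\reg(I_c(G)^k)$ accomplished in \cite[Theorem~4.1]{FM1}. For the lower bound, the case $c(G)=1$ is vacuous; for $c(G)\ge 2$, the formula of \cite[Theorem~4.1]{FM1} yields $\reg(I_c(G)^k)\ge k(n-2)+(c(G)-1)$ for all $k$ sufficiently large, giving $\reg_x \mathcal{R}(I_c(G))\ge c(G)-1$. For the upper bound, the same formula yields $\reg(I_c(G)^k)\le k(n-2)+(n-1)$ for every $k\ge 1$, and hence $\reg_x \mathcal{R}(I_c(G))\le n-1$.

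A second, more Gr\"obner-theoretic route to the upper bound goes via Theorem~\ref{Thm:degx}: a reduced Gr\"obner basis of $J$ consists of binomials of $x$-degree at most $2$, so $\ini_<(J)$ is a monomial ideal in $T$ generated in $x$-degree at most $2$. One then aims to bound $\reg_x(T/\ini_<(J))$ via a bigraded Taylor-type analysis and transfer the bound to $T/J$ by the upper semicontinuity $\reg_x(T/J)\le \reg_x(T/\ini_<(J))$. The main obstacle here is that the $x$-degree bound on the generators of $\ini_<(J)$ does not automatically cap the $x$-shifts at the higher homological steps of the minimal bigraded free resolution; this is why appealing directly to the computation of $\reg(I_c(G)^k)$ in \cite[Theorem~4.1]{FM1} provides the cleanest path to both bounds.
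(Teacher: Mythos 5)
Your lower bound is essentially the paper's own argument and is correct: since $I_c(G)$ is equigenerated in degree $n-2$, the inequality $\reg\,I_c(G)^k\le (n-2)k+\reg_x\mathcal{R}(I_c(G))$ holds for all $k\ge1$ by \cite[Theorem 1.1]{HHZ}, and combining it with $\reg\,I_c(G)^k=(n-2)k+c(G)-1$ for $k\gg0$ from \cite[Theorem 4.1]{FM1} gives $\reg_x\mathcal{R}(I_c(G))\ge c(G)-1$. Note that only this single inequality is needed for that direction, not an identity.

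The upper bound, however, has a genuine gap: the ``standard identity'' $\reg_x\mathcal{R}(I)=\sup_{k\ge1}(\reg(I^k)-kd)$ is not a theorem. What is known is only the inequality $\reg(I^k)-kd\le\reg_x\mathcal{R}(I)$: restricting a minimal bigraded free resolution of $\mathcal{R}(I)$ over $T$ to a fixed $y$-degree yields a possibly non-minimal $S$-free resolution of a shift of $I^k$, so the extreme $x$-shifts of the Rees algebra need not be detected by any single power. The reverse inequality is exactly what your upper bound uses, and it is not available; even its special case --- whether $\reg(I^k)=kd$ for all $k$ forces $\reg_x\mathcal{R}(I)=0$ --- is not a known general fact. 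A warning sign: if your identity were true, the asymptotic formula would give the exact value $\reg_x\mathcal{R}(I_c(G))=c(G)-1$, which is much stronger than the stated theorem; the paper claims only the two bounds precisely because no such identity is at hand. (Moreover, \cite[Theorem 4.1]{FM1} is used in this paper as an asymptotic statement, so your claimed bound $\reg\,I_c(G)^k\le k(n-2)+n-1$ for every $k\ge1$ would need separate justification.) Your ``second route'' is in fact the paper's proof, and the obstacle you raise there is resolved by passing to the (generally non-minimal) Taylor resolution of $\ini_<(J)$: by Theorem \ref{Thm:degx} the generators of $\ini_<(J)$ have $x$-degree at most $2$, hence every multigraded shift in homological degree $i$, being the least common multiple of $i$ of these generators, has $x$-degree at most $2i$; comparing shifts with homological degrees (and using that only the $n$ variables $x_1,\dots,x_n$ occur in the $x$-part) gives $\reg_x T/\ini_<(J)\le n-1$, and the upper semicontinuity $\reg_x T/J\le\reg_x T/\ini_<(J)$ from \cite[Theorem 3.3.4(c)]{HHBook} transfers this bound to $\mathcal{R}(I_c(G))$. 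So the missing idea is not a new identity relating $\reg_x$ to the powers, but explicit control of the $x$-shifts in a concrete resolution of the initial ideal.
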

	\begin{proof}
		Since $I_c(G)$ is equigenerated in degree $n-2$, \cite[Theorem 1.1]{HHZ} implies that
		$$
		\reg\,I_c(G)^k\ \le\ (n-2)k+\reg_{x}\mathcal{R}(I_c(G)),
		$$
		for all $k\ge1$. On the other hand, by \cite[Theorem 4.1]{FM1}, we have
		$$
		\reg\,I_c(G)^k\ =\ (n-2)k+c(G)-1,
		$$
		for all $k\gg0$. Hence $\reg_{x}\mathcal{R}(I_c(G))\ge c(G)-1$.
		
		To prove the upper bound, we will use Theorem~\ref{Thm:degx}   and the Taylor resolution of $\ini_<(J)$. By Theorem~\ref{Thm:degx} we see that each multigraded shift in the $i$th homological degree of  the Taylor resolution of $\ini_<(J)$ has $x$-degree at most $2i$. Hence, using upper semi-continuity (see \cite[Theorem 3.3.4(c)]{HHBook}), $$ \reg_{x}\mathcal{R}(I_c(G))=\reg_x\,T/J\leq \reg_x T/\ini_<(J)\le \max\{2i-(i+1): 1\leq i\leq n\}=n-1,$$ as desired.
	\end{proof}
	
	In view of this result, we pose the following question. In Section 2, we will give a positive answer to this question, when $G$ is a tree or a connected unicyclic graph whose unique cycle has length $3$ or $4$.
	
	\begin{Question}\label{Quest:reg-R-I-c-G}
		Let $G$ be a finite simple graph on $[n]$. Is it true that
		$$
		\reg\,\mathcal{R}(I_c(G))\le n\ ?
		$$
	\end{Question}
	
	\section{Koszulness of $\mathcal{R}(I_c(G))$}\label{sec2}
	
	In this section, we ask when $\mathcal{R}(I_c(G))$ has a quadratic Gr\"obner basis and when it is Koszul. First, we give necessary conditions for $\mathcal{R}(I_c(G))$ to be Koszul.
	
	\begin{Theorem}\label{Koszul} Let $G$ be a finite simple graph. If  $\mathcal{R}(I_c(G))$ is Koszul, then $c(G)=1$ and $G$ satisfies the following conditions.

            \begin{enumerate}
\item[{\rm (i)}]
Any even cycle $C$ of $G$ of length $\geq 6$
  has either an even-chord or three odd-chords
$e, e', e''$ such that $e$ and $e'$ cross in $C$.
\item[{\rm (ii)}]
If $C_1$ and $C_2$ are minimal odd cycles of $G$ with
exactly one common vertex, then there exists an edge
$\{i, j\} \not\in E(C_1) \cup E(C_2)$ with
$i \in V(C_1)$, $j \in V(C_2)$.
\item[{\rm (iii)}]
If $C_1$ and $C_2$ are minimal odd cycles with
$V(C_1) \cap V(C_2) = \emptyset$,
then there exist at least two bridges
between $C_1$ and $C_2$.
\end{enumerate}
\end{Theorem}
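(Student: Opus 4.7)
The plan is to reduce the theorem to the Ohsugi--Hibi characterization of finite connected simple graphs $G$ whose toric edge ring $K[G]$ has a defining ideal generated by quadratic binomials, by passing from $\mathcal{R}(I_c(G))$ to its fiber cone and using that the latter is isomorphic to $K[G]$.

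To establish $c(G)=1$, I would use that Koszulness of the standard graded $K$-algebra $\mathcal{R}(I_c(G))=T/J$ forces the minimal free $T$-resolution of $T/J$ to be linear in the total grading: every bigraded shift $(a_i,b_i)$ at homological step $i$ satisfies $a_i+b_i=i$, so in particular $\reg_{x}\mathcal{R}(I_c(G))=0$. Combined with the bound $\reg_{x}\mathcal{R}(I_c(G))\ge c(G)-1$ from Theorem~\ref{Thm:x-reg-IcG}, this forces $c(G)\le 1$, hence $c(G)=1$ after discarding isolated vertices of $G$. A completely equivalent route avoiding bigraded regularity uses \cite[Theorem~1.1]{HHZ} to deduce $\reg I_c(G)^k=(n-2)k$ for every $k\ge 1$, so that $I_c(G)$ has a linear resolution, and then \cite[Theorem~B]{FM} forces $c(G)=1$.

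For conditions (i)--(iii), I would identify the fiber cone $F(I_c(G))=\mathcal{R}(I_c(G))/(x_1,\dots,x_n)\mathcal{R}(I_c(G))$ with the edge ring $K[G]=K[{\bf x}_{e_1},\dots,{\bf x}_{e_m}]$. Cancelling the common factor ${\bf x}_{[n]}^k$ gives
$$
u_{i_1}\cdots u_{i_k}\,=\,u_{j_1}\cdots u_{j_k}\ \Longleftrightarrow\ {\bf x}_{e_{i_1}}\cdots {\bf x}_{e_{i_k}}\,=\,{\bf x}_{e_{j_1}}\cdots {\bf x}_{e_{j_k}},
$$
so $F(I_c(G))\cong K[G]$ as standard graded $K$-algebras, with $y_i\mapsto{\bf x}_{e_i}$. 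Since Koszulness implies that $J$ is generated in degree $2$, its image $\widetilde{J}$ under the projection $T\twoheadrightarrow T/(x_1,\dots,x_n)\cong K[y_1,\dots,y_m]$ remains generated by quadrics; under the identification above, $\widetilde{J}$ is exactly the toric ideal of $K[G]$. Hence the toric ideal of $K[G]$ is generated by quadratic binomials.

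Finally, I would invoke the Ohsugi--Hibi theorem \cite{HO}, which asserts that for a finite connected simple graph $G$ the toric ideal of $K[G]$ is generated by quadratic binomials if and only if $G$ satisfies (i), (ii) and (iii). Applied to our $G$ (which is connected after removal of isolated vertices), this delivers the three conditions simultaneously. The main delicate point I expect is the first step: the bigraded inequality $\reg_x\le 0$ is a consequence of Koszulness but requires some care with bigraded regularity, and the safer variant is to deduce the linear resolution of $I_c(G)$ first and invoke \cite[Theorem~B]{FM}.
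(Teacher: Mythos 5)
Your handling of conditions (i)--(iii) is essentially the paper's own argument: identify the fiber cone $\mathcal{F}(I_c(G))=T/(J+(x_1,\dots,x_n))$ with the edge ring $K[G]$ (cancelling ${\bf x}_{[n]}^k$), observe that Koszulness makes $J$ quadratically generated so its image in $K[y_1,\dots,y_m]$, which is the toric ideal of $K[G]$, is generated by quadratic binomials, and then invoke Ohsugi--Hibi \cite[Theorem 1.2]{HO2} for the connected graph $G$. That part is correct and, if anything, a slightly cleaner packaging of the paper's explicit lifting of a relation $z_A-z_B$ to $y_A-y_B\in J$.

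The gap is in your derivation of $c(G)=1$. Koszulness of $\mathcal{R}(I_c(G))=T/J$ is a statement about the resolution of $K$ over $T/J$; it does \emph{not} force the minimal free $T$-resolution of $T/J$ to be linear in the total grading. It only forces $J$ to be generated in degree $2$, while the higher $T$-syzygies of a Koszul algebra are in general nonlinear: for instance $T/(x_1x_2,x_3x_4)$ is Koszul, yet its Koszul-complex resolution has a shift of total degree $4$ in homological degree $2$, so $\reg_T(T/J)=2$. Hence neither ``$a_i+b_i=i$'' nor the consequence $\reg_x\mathcal{R}(I_c(G))=0$ follows from Koszulness by the argument you give, and your ``safer variant'' does not avoid the problem: to extract $\reg I_c(G)^k=(n-2)k$ from \cite[Theorem 1.1]{HHZ} you again need $\reg_x\mathcal{R}(I_c(G))=0$, which is exactly the unproved claim, so the appeal to \cite[Theorem B]{FM} never gets off the ground. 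What is true, and what the paper uses, is Blum's theorem \cite[Corollary 3.6]{Blum}: if the Rees algebra of an (equigenerated) ideal is Koszul, then all powers of the ideal have linear resolution; combined with \cite[Corollary 3.2]{FM1} (equivalently, the criterion $c(G)=1$ from \cite{FM}) this yields $c(G)=1$. Replace your first step by that citation (or by a genuine bigraded argument in the spirit of Blum/R\"omer); as written, that step fails.
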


	\begin{proof}
		Since $\mathcal{R}(I_c(G))$ is Koszul, by \cite[Corollary 3.6]{Blum}, $I_c(G)^k$ has linear resolution for all $k\geq 1$. Then using \cite[Corollary 3.2]{FM1} we have $c(G)=1$. 

        Now, we show that $G$ satisfies the conditions (i) to (iii). To this aim, by \cite[Theorem 1.2]{HO2} (see also \cite[Theorem 5.14]{HHBook2}), it is enough to show that the defining ideal $L$ of the edge ring $K[G]$ is generated by quadratic binomials. Since $\mathcal{R}(I_c(G))$ is Koszul, its defining ideal $J$ is generated by quadratic binomials. Now, consider a binomial relation $z_A-z_B\in K[z_1,\ldots,z_m]$ of the edge ring $K[G]$. Here, $z_F=\prod_{i\in F} z_i$ for a subset $F\subset [m]$. Then $y_A-y_B\in J$. So there are quadratic binomials $f_{A_1},\ldots,f_{A_r} \in J$, where each $f_{A_i}$ is a quadratic binomial in $K[y_1,\ldots,y_m]$ such that 
        $y_A-y_B= \sum_{i=1}^r u_i f_{A_i}$ and $u_1,\ldots u_r$ are monomials in $K[y_1,\ldots,y_m]$. For any $i$, let $f_{A_i}=y_{a_i}y_{b_i}-y_{c_i}y_{d_i}$. We set $f'_{A_i}=z_{a_i}z_{b_i}-z_{c_i}z_{d_i}$. Clearly, $f'_{A_1},\ldots,f'_{A_r}\in L$. Moreover, $z_A-z_B= \sum_{i=1}^r v_i f'_{A_i}$, where $v_i=\prod_{y_j\mid u_i} z_j$ for each $i$. This shows that $L$ is indeed generated by quadratic binomials.    
		\end{proof}

        Next, we provide large families of graphs for which $\mathcal{R}(I_c(G))$ is Koszul. 
		
        \begin{Theorem}\label{tree}
 If $G$ is a tree, then the defining ideal $J$ of $\mathcal{R}(I_c(G))$ has a quadratic Gröbner basis with respect to some monomial order. In particular $\mathcal{R}(I_c(G))$ is Koszul.
	\end{Theorem}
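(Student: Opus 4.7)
My plan is to exhibit an explicit quadratic Gr\"obner basis of $J$, from which Koszulness of $\mathcal{R}(I_c(G)) = T/J$ follows by the standard fact that a standard-graded $K$-algebra whose defining ideal admits a quadratic Gr\"obner basis is Koszul.

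I reuse the labeling and monomial order from the proof of Theorem~\ref{Thm:degx}: label $V(G) = [n]$ so that $\{s+1,\ldots,n\}$ induces a connected subtree for every $0 \le s < n$ (for a tree this amounts to successive leaf removal), and equip $T$ with the lex order induced by $x_1 > \cdots > x_n > y_1 > \cdots > y_m$. The natural quadratic generators are read off from paths of length two in $G$: for each vertex $u$ of degree $\ge 2$ with neighbors $a_1 < \cdots < a_d$, the identity $x_{a_{j-1}}\,u_{\{a_{j-1},u\}} = x_{a_j}\,u_{\{a_j,u\}} = \mathbf{x}_{[n]}/x_u$ yields the quadratic binomial
\begin{equation*}
g_u^{(j)} \ =\ x_{a_{j-1}} y_{\{a_{j-1},u\}} - x_{a_j} y_{\{a_j,u\}} \ \in\ J, \qquad j = 2,\ldots,d,
\end{equation*}
with leading term $x_{a_{j-1}} y_{\{a_{j-1},u\}}$. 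Let $\mathcal{G}$ collect all such binomials; by construction, different elements of $\mathcal{G}$ have pairwise distinct, pairwise non-dividing leading terms.

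The core claim is that $\mathcal{G}$ is a Gr\"obner basis of $J$. I verify Buchberger's criterion for $\langle\mathcal{G}\rangle$: the S-polynomial of two elements $g_u^{(j)}, g_{u'}^{(j')}$ is trivial when their leading terms are coprime, and otherwise the two associated length-two paths in $G$ must share a common edge or a common vertex, forcing (by the tree property) a very rigid local configuration. A direct calculation then shows that each such S-polynomial reduces to zero in at most two further reductions by elements of $\mathcal{G}$, with every intermediate monomial again indexed by a length-two path in the tree. To conclude that $\mathcal{G}$ actually generates $J$, I compare heights: $|\mathcal{G}| = \sum_u (\deg u - 1) = n-2$ coincides with $\height(J) = \dim T - \dim\mathcal{R}(I_c(G)) = (n+m)-(n+1) = n-2$, and the squarefreeness of the leading monomials of $\mathcal{G}$ makes $\langle\mathcal{G}\rangle$ a radical ideal. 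The equality $\langle\mathcal{G}\rangle = J$ is then obtained via a Hilbert series comparison, or alternatively by combining Theorem~\ref{Thm:degx} (restricting the $x$-degree of any reduced Gr\"obner basis element of $J$ to at most one for connected $G$) with the absence of $y$-only relations in $J$ when $G$ is a tree (because $K[G]$ is then a polynomial ring, so every relation must carry at least one $x$-factor).

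The main obstacle is the S-pair verification above: each case is elementary, but systematically cataloguing the ways two length-two paths can meet in a tree (sharing an edge, sharing only a vertex, or being disjoint but connected by further edges of $G$) and tracking the reductions cleanly calls for some care. A potentially cleaner alternative is induction on the number of vertices: if $v$ is a leaf of $G$ with unique neighbor $w$, the decomposition $I_c(G) = (u_{\{v,w\}}) + x_v\cdot I_c(G\setminus v)$ suggests lifting a quadratic Gr\"obner basis of the Rees ideal of $I_c(G\setminus v)$, available by induction, by adjoining the new quadratic binomials coming from paths through $w$ that involve $v$.
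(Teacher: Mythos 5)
Your set $\mathcal{G}$ and your monomial order are exactly right: under the leaf-removal labeling the paper also works with the lex order $x_1>\dots>x_n>y_1>\dots>y_m$, and the quadrics $x_ay_{\{a,u\}}-x_by_{\{b,u\}}$ coming from paths of length two are precisely the Gr\"obner basis elements that emerge there (in fact, with this labeling their leading terms are pairwise coprime, so Buchberger's criterion is automatic and the S-pair catalogue you worry about is not the issue). The genuine gap is the step $\langle\mathcal{G}\rangle=J$, i.e.\ showing that $\ini_<(J)$ has no minimal generators other than the monomials $x_ay_{\{a,u\}}$. Neither of your two routes closes it. The height/radical/Hilbert-series route only gives $\langle\mathcal{G}\rangle\subseteq J$ with $J$ among the minimal primes of the complete intersection $\langle\mathcal{G}\rangle$; a complete intersection of such quadrics need not be prime, and a Hilbert series comparison presupposes independent knowledge of the Hilbert function of $\mathcal{R}(I_c(G))$, which you do not have. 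The second route misreads Theorem~\ref{Thm:degx}: that theorem (and its proof in the connected case) bounds only the $x$-degree of reduced Gr\"obner basis elements, not the $y$-degree, so it leaves open elements of the form $x_p\,y_{i_1}\cdots y_{i_k}-x_q\,y_{j_1}\cdots y_{j_k}$ with $k\ge 2$; the absence of $y$-only (fiber) relations for a tree says nothing about these. Ruling them out of the reduced Gr\"obner basis is exactly the combinatorial heart of the matter.

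The paper's proof supplies that missing step directly: it takes an arbitrary minimal generator $u\,y_{i_1}\cdots y_{i_k}$ of $\ini_<(J)$, realizes it as the leading term of a primitive binomial corresponding to a primitive even closed walk $W\colon n+1,p,\ell_1,\dots,\ell_{2k-1},q,n+1$ in $G^*$ (so $u=x_p$, $v=x_q$, $q>p$ by the degree bound from \cite[Theorem 3.1]{FM1}), proves $\ell_1>p$ using that $G[p,\dots,n]$ is connected and $G$ has a unique path between any two vertices, and then uses connectedness of $G[p+1,\dots,n]$ to find $s>p$ adjacent to $\ell_1$; the quadric $x_py_{i_1}-x_sy_t\in J$ then has leading term dividing the given generator, forcing $\ini_<(J)$ to be generated by the quadratic monomials $x_iy_j$. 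To repair your write-up you would need to prove precisely this divisibility statement (or an equivalent, e.g.\ a worked-out version of your leaf-induction, which as stated is only a suggestion and requires a genuine lifting argument for Rees ideals); without it, Buchberger's criterion certifies $\mathcal{G}$ only as a Gr\"obner basis of the ideal it generates, not of $J$.
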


    \begin{proof}
         Let $G$ be a tree with $n$ vertices. We label the vertices of $G$ such that for each $1\leq r\leq n-1$, the vertex $r$ is a leaf of $G_r=G[r,r+1,\ldots,n]$. Here, by $G[r,r+1,\ldots,n]$ we mean the induced subgraph of $G$ on the vertex set $\{r,r+1,\ldots,n\}$. 
		Consider the lex order $<$ on $T$ induced by $x_1>\dots>x_n>y_1>\dots>y_m$. We prove that $J$ has a quadratic Gröbner basis with respect to this order. 
		
		Consider a minimal monomial generator $uy_{i_1}\cdots y_{i_k}\in \ini_<(J)$, where $u\in S$ is a monomial. Let $h=uy_{i_1}\cdots y_{i_k}-vy_{j_1}\cdots y_{j_k}\in J$ be a primitive binomial with $\ini_<(h)=uy_{i_1}\cdots y_{i_k}$, such that $v\in S$ is a monomial with $\gcd(u,v)=1$ and $u>_\lex v$. 
		Then the relation $h\in J$ gives the relation $h'=uz_{j_1}\cdots z_{j_k}-vz_{i_1}\cdots z_{i_k}\in J'$.
		Since $h$ is a primitive binomial in $J$, $h'$ is a primitive binomial in $J'$. 
		So by \cite[Corollary 10.1.5]{HHBook}, the relation $h'$ corresponds to a primitive even closed walk in $G^*$, say $W$. Since $G$ has no cycle, $W$ contains the vertex $n+1$, which means that $\deg(u)\geq 1$. On the other hand, the labeling on $G$ is so that $G_r$ is connected for all $r$. Hence, by \cite[Theorem 3.1]{FM1} and its proof, $\deg(u)\leq 1$. Hence $u=x_p$ for some $p$ and $v=x_q$ for some $q>p$. Moreover, $W$ is of the form $n+1,p,\ell_1,\ldots,\ell_{2k-1},q,n+1$ with $k\geq 1$. Since $G$ has no cycles, $\ell_1,\ldots,\ell_{2k-1},p,q\in V(G)$ are distinct vertices. We set $\ell_0=p$ and $\ell_{2k}=q$. Then after relabeling the edges, we may assume that $e_{j_t}=\{\ell_{2t-1},\ell_{2t}\}$ and $e_{i_t}=\{\ell_{2t-2},\ell_{2t-1}\}$, for any $1\leq t\leq k$.
		
		We claim that $\ell_1>p$. Suppose on the contrary that $\ell_1<p$.
		By assumption, $G_p=G[p,p+1,\ldots,q]$ is connected. So there is a path $L$ from $p$ to $q$ in $G_p$. Since $\ell_1<p$, the path $L$ does not contain the vertex $\ell_1$. Hence, $L$ is different from the path $p,\ell_1,\ldots,\ell_{2k-1},q$. This means that there are at least two paths from $p$ to $q$ in $G$, which contradicts to the fact that $G$ is a tree.  
		Thus, $\ell_1>p$, as was claimed. Then $\ell_1$ and $q$ are distinct vertices of the connected graph $G_{p+1}=G[p+1,\ldots,n]$. So by connectedness of $G_{p+1}$, there exists a vertex $s>p$ such that $\{\ell_1,s\}\in E(G)$. Then $\{\ell_1,s\}=e_t$ for some $t$ and $x_pz_t-x_sz_{i_1}\in J'$.  Therefore, $g=x_py_{i_1}-x_sy_t\in J$. Since $p<s$, we have $\ini_<(g)=x_py_{i_1}$. Clearly, $\ini_<(g)$ divides $\ini_<(h)$ and by the minimality of $\ini_<(h)$ we obtain $\ini_<(h)=\ini_<(g)=x_py_{i_1}$. Thus $\ini_<(J)$ is generated by quadratic monomials of the form $x_iy_j$. Hence, by \cite[Theorem 2.28]{HHBook2},  $\mathcal{R}(I_c(G))$ is Koszul. 
	\end{proof}
	
	Now, let $G$ be a connected unicyclic graph with unique cycle $C$ of length $d$. In order to study the defining ideal $J$ of $\mathcal{R}(I_c(G))$, in the next two theorems we consider the following labeling on $V(G)$. For any $1\leq i\leq n-d$, let $i$ be a leaf of the graph $G_i=G[i,i+1,\ldots,n]$. Moreover, we label the vertices of $C$ by $n-d+1,n-d+2,\ldots,n$ such that $\{i,i+1\}\in E(G)$ for $n-d+1\leq i\leq n-1$. We consider the lex order on the polynomial ring $T=S[y_1,\dots,y_m]$ induced by the order $x_1>\cdots>x_n>y_1>\cdots>y_m$ and denote this order by $<'$. 
	
	\begin{Theorem}\label{unicylic}
		Let $G$ be a unicyclic graph with a cycle of length $d$. Then 
		\begin{enumerate}
			\item [(a)] The  ideal $J$  has a quadratic Gröbner basis with respect to $<'$ if and only if $c(G)=1$ and $d\in\{3,4\}$.
			\item [(b)] If $c(G)=1$ and $d\in\{3,4\}$, then  $\mathcal{R}(I_c(G))$ is Koszul.
		\end{enumerate}
	\end{Theorem}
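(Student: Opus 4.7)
Part (b) follows directly from part (a): by \cite[Theorem 2.28]{HHBook2}, whenever the defining ideal of an algebra admits a quadratic Gr\"obner basis, the algebra is Koszul. So the content lies in part (a).

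For the ``if'' direction of (a), assume $c(G)=1$ and $d\in\{3,4\}$. The plan is to adapt the proof of Theorem~\ref{tree}. Take a minimal monomial generator $uy_{i_1}\cdots y_{i_k}$ of $\ini_{<'}(J)$, arising as the leading term of a primitive binomial $h\in J$, which by the discussion preceding Theorem~\ref{Thm:degx} corresponds to a primitive even closed walk $W$ in $G^*$. I would split into cases according to whether $W$ visits the apex vertex $n+1$. If $W$ avoids $n+1$, then $W$ is a primitive even closed walk in the unicyclic graph $G$: for $d=3$ no such walk exists, since traversing the odd triangle any number of times yields trivial or non-primitive relations, while for $d=4$ the only option is $W=C$ itself, giving a quadratic pure-$y$ binomial. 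If $W$ visits $n+1$, then $\deg u\ge 1$, and by the proof of Theorem~\ref{Thm:degx} we have $\deg u\le 2$. The aim is to use the connectedness of each $G_r$, guaranteed by the labeling, to find a quadratic binomial in $J$ whose leading term divides $\ini_{<'}(h)$, forcing $\ini_{<'}(h)$ itself to be quadratic by minimality.

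For the ``only if'' direction, if $c(G)\neq 1$ then by Theorem~\ref{Koszul} the Rees algebra $\mathcal{R}(I_c(G))$ is not Koszul, so $J$ admits no quadratic Gr\"obner basis. If $c(G)=1$ and $d\ge 5$, I would exhibit a binomial in $J$ whose leading term is not divisible by any quadratic element of $\ini_{<'}(J)$, forcing a minimal generator of $\ini_{<'}(J)$ of degree $\ge 3$. For even $d\ge 6$, the length-$d$ walk traversing $C$ once lies entirely in $G$ and is primitive; since $G$ is unicyclic and contains no $4$-cycle, no pure-$y$ quadratic relation exists in $J$, so the corresponding pure-$y$ binomial of degree $d/2\ge 3$ has a leading term divisible by no quadratic leading term. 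For odd $d\ge 5$, consider the walk $W = n+1,v_{n-1},v_{n-2},\ldots,v_{n-d+1},v_n,n+1$ of length $d+1$, which uses the $d-1$ cycle edges other than $\{v_{n-1},v_n\}$ together with the two apex edges at $v_{n-1}$ and $v_n$. This walk is primitive, and the corresponding binomial has leading term of the form $x_{n-1}\cdot(\prod y_e)$ of degree $(d+1)/2\ge 3$. Any quadratic leading term dividing it would be of the form $x_{n-1}y_e$, which would come from a length-$4$ walk $n+1,v_{n-1},v_k,v_n,n+1$ in $G^*$ and hence require a common neighbor $v_k$ of $v_{n-1}$ and $v_n$ in $G$; but since $\{v_{n-1},v_n\}$ is already a cycle edge, such a $v_k$ would create a second cycle in $G$, contradicting unicyclicity.

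The main obstacle is completing the ``if'' direction when $W$ visits $n+1$ and simultaneously traverses edges of $C$: the tree argument of Theorem~\ref{tree} exploits uniqueness of paths between vertices in $G$, which fails whenever $W$ uses cycle edges, since two distinct arcs of $C$ connect each pair of vertices of $C$. I expect to handle this through a careful case analysis distinguishing whether $W$ uses zero, one, or both arcs of $C$, leveraging the shortness of $C$ for $d\in\{3,4\}$ to ensure that a quadratic shortcut can always be produced via the connectedness of some $G_r$.
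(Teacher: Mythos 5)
Your ``only if'' direction is essentially sound. For $c(G)\neq 1$ you use Theorem~\ref{Koszul} as the paper does; for even $d\ge 6$ you argue directly with the pure-$y$ binomial of the cycle $C$ (degree $d/2\ge 3$, not divisible by any quadratic leading term since $G$ has no $4$-cycle), which is a slightly more self-contained route than the paper's appeal to Theorem~\ref{Koszul}(i); and for odd $d\ge 5$ your walk through the apex is essentially the paper's binomial $g=x_{n-1}y_{i_0}y_{i_2}\cdots y_{i_{2k-2}}-x_ny_{i_1}\cdots y_{i_{2k}}$. One small point there: you assert that a quadratic divisor $x_{n-1}y_e$ of the leading term comes from a walk $n+1,v_{n-1},v_k,v_n,n+1$, i.e.\ that the second term of the quadratic binomial involves $x_n$; this needs the lex argument (a binomial $x_{n-1}y_e-x_sy_f$ with leading term $x_{n-1}y_e$ forces $s>n-1$, hence $s=n$), since for $s<n-1$ a common neighbour of $v_{n-1}$ and $v_s$ would give no contradiction with unicyclicity.

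The genuine gap is in the ``if'' direction, and you acknowledge it yourself: when the primitive walk $W$ passes through the apex $n+1$, you only state the \emph{aim} of producing a quadratic binomial whose leading term divides $\ini_{<'}(h)$, and you defer the interaction with the cycle to an unspecified case analysis. This is exactly where the paper does its real work. Two ingredients are missing. First, you only get $\deg u\le 2$ from Theorem~\ref{Thm:degx}; the paper instead uses the labeling (each $G_r=G[r,\dots,n]$ connected) together with \cite[Theorem 3.1]{FM1} to force $\deg u\le 1$, hence $u=x_p$, $v=x_q$ with $q>p$, and $W$ of the form $n+1,p,\ell_1,\dots,\ell_{2k-1},q,n+1$; with $\deg u=2$ left open your reduction step does not even start. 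Second, the divisibility by a quadratic $x_py_j$ is obtained by a concrete case analysis: if $p\in V(C)$, the labeling forces $q\in V(C)$ and primitivity plus unicyclicity (with $d\in\{3,4\}$) forces all $\ell_i$ into $V(C)$, so $W$ is a $4$-cycle through $n+1$ and one can take $s=q$; if $p\notin V(C)$, a descent argument along the leaf-ordering (if $\ell_1<p$, respectively $\ell_2<p$, then $\ell_1>\ell_2>\cdots$ decreases and $\ell_{2k-1}$ would have two neighbours $\ell_{2k-2},q$ in $G[\ell_{2k-1},\dots,n]$, contradicting that it is a leaf there) shows $\ell_1>p$ and $\ell_2>p$, so $s=\ell_2$ works and $x_py_{i_1}\in\ini_{<'}(J)$ divides $\ini_{<'}(h)$. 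Without an argument of this kind (or a substitute), the ``if'' direction, and hence part (b), is not proved.
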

	
	\begin{proof} 
		(a) Let $G$ be a unicyclic graph with a $3$-cycle $C$ and $c(G)=1$. Since removing isolated vertices does not change $J$, we may assume that $G$ is connected. 
		Let    $uy_{i_1}\cdots y_{i_k}\in \ini_{<'}(J)$ be a minimal monomial generator, where $u\in S$ is a monomial, and let $h=uy_{i_1}\cdots y_{i_k}-vy_{j_1}\cdots y_{j_k}\in J$ be a primitive binomial with $\ini_{<'}(h)=uy_{i_1}\cdots y_{i_k}$, such that $v\in S$ is a monomial with $\gcd(u,v)=1$ and $u>_\lex v$.  Then $h$ corresponds to a primitive even closed walk in $G^*$, say $W$.
		The labeling on $G$ described before the statement of the theorem, implies that $G_r=G[r,r+1,\ldots,n]$ is connected for all $r$. Hence, by \cite[Theorem 3.1]{FM1} and its proof,  $\deg(u)\leq 1$. Since $G$ has no even closed walks, $W$ contains the vertex $n+1$, which means that $\deg(u)=1$.
		Hence $u=x_p$ for some $p$ and $v=x_q$ for some $q>p$. So $W$ is of the form $$n+1,p=\ell_0,\ell_1,\ldots,\ell_{2k-1},\ell_{2k}=q,n+1,$$ where $\ell_1,\ldots,\ell_{2k-1},p,q\in V(G)$ and $k\geq 1$. For any $1\leq t\leq k$, after relabeling the edges we have $e_{j_t}=\{\ell_{2t-1},\ell_{2t}\}$ and $e_{i_t}=\{\ell_{2t-2},\ell_{2t-1}\}$.
		We show that $\{\ell_1,s\}\in E(G)$ for some $s>p$. Once we show this, the same argument as in the proof of Theorem~\ref{tree} implies that a quadratic monomial of the form $x_iy_j\in \ini_{<'}(J)$ divides $\ini_{<'}(h)$, as desired. If $p\in V(C)$, then the inequality $p<q$, and the labeling on $G$ imply that $q\in V(C)$. From this together with the assumptions that $W$ is primitive and $G$ is has a unique cycle of length $3$, we conclude that $\{\ell_0,\ell_1,\ldots,\ell_{2k-1},\ell_{2k}\}\subset V(C)$. Thus $k=1$ and $W$ is a $4$-cylce $W: n+1,p,\ell_1,q,n+1$. So taking $s=q$, we have $\{\ell_1,s\}\in E(G)$ with $s>p$.
		Now, consider the case that $p\notin V(C)$. First, we show that $\ell_1>p$. 
		By contradiction assume that $\ell_1<p$. Then $\ell_1\notin V(C)$. Since $\ell_1$ is a leaf of $G[\ell_1,\ell_1+1,\ldots,n]$, by $\ell_1<p$ and $\{p,\ell_1\},\{\ell_1,\ell_2\}\in E(G)$ and that $\ell_2\neq p$, we obtain $\ell_2<\ell_1$. Hence, $\ell_2\notin V(C)$.   Similar arguments imply the inequalities  $q>p>\ell_1>\cdots>\ell_{2k-1}$.
		Then $\ell_{2k-2}$ and $q$ are distinct vertices adjacent to $\ell_{2k-1}$ in $G[\ell_{2k-1},\ldots,n]$, which contradicts to $\ell_{2k-1}$ being a leaf of $G[\ell_{2k-1},\ldots,n]$. Thus $\ell_1>p$, as desired. Next, we show that $\ell_2>p$. 
		Suppose on the contrary that $\ell_2<p$. This implies that $\ell_2\notin V(C)$,  $\ell_2<\ell_1$, and that $\ell_1$ is a vertex of $G[\ell_2,\ldots,n]$ which is adjacent to $\ell_2$. If $\ell_2<\ell_3$, then $\ell_3$ is adjacent to $\ell_2$ in $G[\ell_2,\ldots,n]$, as well, which contradicts to $\ell_2$ being a leaf of $G[\ell_2,\ldots,n]$. Hence, $\ell_3<\ell_2$. Similar arguments show that $\ell_{2k-1}<\ell_{2k-2}<\cdots <\ell_2<p<q$. Thus $\ell_{2k-2}$ and $q$ are adjacent to $\ell_{2k-1}$ in $G[\ell_{2k-1},\ldots,n]$, which contradicts to $\ell_{2k-1}$ being a leaf of $G[\ell_{2k-1},\ldots,n]$. Thus $\ell_2>p$.    
		Since $\{\ell_1,\ell_2\}\in E(G)$,  the desired vertex $s$ is $s=\ell_2$. 
		The proof is complete in the case of $d=3$.   
		
		\smallskip
		
		Now, let $G$ be a connected unicyclic graph with a $4$-cycle $C$. 
		For a minimal monomial generator $uy_{i_1}\cdots y_{i_k}$ of $\ini_{<'}(J)$, if $\deg(u)=1$, then the same argument as in the case of the $3$-cycle shows that 
		$uy_{i_1}\cdots y_{i_k}=x_iy_j$ for some $i$ and $j$. Now let $\deg(u)=0$. Then the primitive binomial $h=y_{i_1}\cdots y_{i_k}-y_{j_1}\cdots y_{j_k}$ corresponds to a primitive even closed walk in $G$. Since the only primitive even closed walk in $G$ is the $4$-cycle $C$, $h$ is a quadratic binomial. Hence, $\ini_{<'}(J)$ is generated by quadratic monomials.
		
		Conversely, assume that $J$ has a quadratic Gröbner basis with respect to  $<'$. Then $\mathcal{R}(I_c(G))$ is Koszul. So by Theorem~\ref{Koszul}, we have $c(G)=1$ and $G$ has no induced even cycle of length $\geq 6$. By contradiction assume that  $d\geq 5$. Since $C$ is an induced cycle of $G$, we obtain that $d$ is odd.
        So $d=2k+1$ for some $k\geq 2$, and $C$ is the cycle on the vertices $n-2k,n-2k+1,\ldots,n$. For each $0\leq \ell\leq 2k-1$, let $i_\ell$ be the integer with $\{n-2k+\ell,n-2k+\ell+1\}=e_{i_{\ell}}$. Moreover, we let $\{n,n-2k\}=e_{i_{2k}}$. Then 
		$x_{n-1}z_{i_1}z_{i_3}\cdots z_{i_{2k-3}}z_{i_{2k}}-x_{n}z_{i_0}z_{i_2}\cdots z_{i_{2k-2}}\in J'$. Hence, 
		$$g=x_{n-1}y_{i_0}y_{i_2}\cdots y_{i_{2k-2}}-x_{n}y_{i_1}y_{i_3}\cdots y_{i_{2k-3}}y_{i_{2k}}\in J,$$ and $\ini_{<'}(g)=x_{n-1}y_{i_0}y_{i_2}\cdots y_{i_{2k-2}}$. Since $J$ has a quadratic Gröbner basis with respect to $<'$, a monomial $w\in\ini_{<'}(J)$ of degree two divides $x_{n-1}y_{i_0}y_{i_2}\cdots y_{i_{2k-2}}$. 
		From $d=2k+1$, we know that $G$ has no even cycle. Thus $w=x_{n-1}y_{i_t}$ for some $t\in\{0,2,\ldots,2k-2\}$. Let $g_0=x_{n-1}y_{i_t}-x_sy_j\in J$ be a relation with $\ini_{<'}(g_0)=x_{n-1}y_{i_t}$. Then we have $s=n$.
		The relation $g_0=x_{n-1}y_{i_t}-x_ny_j$ corresponds to $x_{n-1}z_j-x_nz_{i_t}\in J'$ and hence, to a $4$-cycle of the form $n+1,n-1,\lambda,n,n+1$ in $G^*$, where 
		$e_{i_t}=\{n-1,\lambda\}$ and $e_j=\{n,\lambda\}$. Notice that by the labeling on $V(G)$ we have $\{n-1,n\}\in E(G)$. Therefore, $n-1,\lambda,n$ form a $3$-cycle in $G$, which contradicts to the fact that $G$ is a unicyclic graph with a cycle of length $d\geq 5$.
		
		\smallskip
		(b) follows from (a) and  \cite[Theorem 2.28]{HHBook2}.

	\end{proof}
	
	Using Theorem \ref{tree} and Theorem \ref{unicylic}, we are able to give a positive answer to Question \ref{Quest:reg-R-I-c-G} for trees and connected unicyclic graphs with the unique cycle of length $3$ or $4$ in the following corollary.  
	Recall that a {\em matching} $M$ in a graph $G$ is a set of pairwise disjoint edges of $G$. The {\em matching number} of $G$ is the largest size of a matching of $G$ and is denoted by $\mat(G)$. 
	
	\begin{Corollary}\label{bound-reg}
		Let $G$ be a tree or a connected unicyclic graph with the unique cycle of length $d\in\{3,4\}$.  Then
		$$
		\reg\,\mathcal{R}(I_c(G))\le |V(G)|. 
		$$ 
	\end{Corollary}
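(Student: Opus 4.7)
The plan is to combine the quadratic Gr\"obner bases of $J$ obtained in Theorems~\ref{tree} and \ref{unicylic}(a) with a classical regularity bound for edge ideals of graphs.

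First, I would invoke Theorem~\ref{tree} (if $G$ is a tree) or Theorem~\ref{unicylic}(a) (if $G$ is connected unicyclic with $d\in\{3,4\}$) to obtain a lex-type monomial order $\prec$ on $T = S[y_1,\ldots,y_m]$ with respect to which $J$ admits a quadratic Gr\"obner basis. Inspecting those proofs, the minimal monomial generators of $\ini_\prec(J)$ all have the form $x_p y_i$, together with one additional squarefree binomial $y_i y_j$ (with $i\ne j$) arising from the primitive even closed walk along $C$ in the $d=4$ case. Consequently $\ini_\prec(J)=I(H)$ is the edge ideal of a simple graph $H$ on the vertex set $\{x_1,\ldots,x_n\}\cup\{y_1,\ldots,y_m\}$.

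Next, by upper semi-continuity of the regularity under Gr\"obner degeneration \cite[Theorem 3.3.4(c)]{HHBook},
$$
\reg\,\mathcal{R}(I_c(G)) \;=\; \reg\,T/J \;\le\; \reg\,T/\ini_\prec(J) \;=\; \reg\,T/I(H).
$$
Together with the classical H\`a--Van Tuyl bound $\reg\,T/I(H)\le \mat(H)$, this reduces the claim to showing $\mat(H) \le n$. Since $|V(H)|=n+m$, any matching of $H$ uses at most $\lfloor(n+m)/2\rfloor$ edges. For a tree, $m=n-1$ and so $\mat(H)\le n-1$; for a connected unicyclic graph, $m=n$ and so $\mat(H)\le n$. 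In either case the bound $\reg\,\mathcal{R}(I_c(G)) \le n = |V(G)|$ follows.

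The only delicate point is to verify that $\ini_\prec(J)$ is squarefree---that no generator $y_i^2$ occurs---which is automatic because each primitive binomial in $J$ corresponds to a primitive even closed walk in $G^*$ traversing pairwise distinct edges. Beyond this verification, the proof is a straightforward chain of inequalities.
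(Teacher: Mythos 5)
Your proposal is correct and follows essentially the same route as the paper: identify $\ini_\prec(J)$ from Theorems \ref{tree} and \ref{unicylic}(a) as the edge ideal of a graph $H$ on $\{x_1,\dots,x_n\}\cup\{y_1,\dots,y_m\}$, pass the regularity to the initial ideal, apply the H\`a--Van Tuyl bound $\reg\,T/I(H)\le\mat(H)$, and use $m\le n$ to get $\mat(H)\le n$. The only (immaterial) difference is that you use the general inequality $\reg\,T/J\le\reg\,T/\ini_\prec(J)$, whereas the paper invokes the Conca--Varbaro squarefree Gr\"obner degeneration theorem to get equality; the inequality suffices.
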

	
	\begin{proof}
		By Theorem \ref{tree}, Theorem \ref{unicylic}(a) and their proofs, there exists a monomial order $<$ on $T$ such that $\ini_<(J)$ is generated by squarefree monomials of the forms $x_iy_j$ and $y_ry_s$. Therefore, $\ini_<(J)$ is the edge ideal of a  graph $H$ on the vertex set $V(H)=\{x_1,\ldots,x_n\}\cup\{y_1,\ldots,y_m\}$, where $n=|V(G)|$ and $m=|E(G)|$. Since $\ini_<(J)$ is squarefree, by \cite[Corollary 2.7]{CV}, we have $$\reg\,\mathcal{R}(I_c(G))=\reg\,T/J=\reg\,T/\ini_<(J)=\reg\,T/I(H).$$  
		By \cite[Theorem 6.7]{HV}, we have $\reg\,T/I(H)\leq\mat(H)$. Since $G$ is either a tree or unicyclic, we have $n-1\leq m\leq n$. Thus $|V(H)|\leq 2n$. Therefore, $\mat(H)\leq n$. This shows that $\reg\,\mathcal{R}(I_c(G))=\reg\,T/I(H)\leq n$.
	\end{proof}
	
	\section{Normality of $\mathcal{R}(I_c(G))$}\label{sec3}

    In this section, we put together known results on the normality of the Rees algebras and the toric rings of $I(G)$ and $I_c(G)$.  Moreover, we give an independent proof for the equality $\ell(I(G))=\ell(I_c(G))=n-b(G)$.
	
	Recall that a graph $G$ is said to satisfy the \textit{odd cycle condition}, if for any two  odd cycles $C_1$ and $C_2$ of $G$, either $C_1$ and $C_2$ have a common vertex or there exist $i\in V(C_1)$ and $j\in V(C_2)$ such that $\{i,j\}\in E(G)$.\smallskip
    
   Let $\m=(x_1,\dots,x_n)$. For an ideal $I\subset S$, the \textit{fiber cone} $\mathcal{R}(I)/\m\mathcal{R}(I)$ of $I$ is denoted by $\mathcal{F}(I)$. Combining results from \cite{HO},\cite{SVV}, \cite{RV}, and \cite{ALS} we obtain
	\begin{Theorem}\label{Thm:Fiber-IcG}
		For a finite simple graph $G$, the following conditions are equivalent.
		\begin{enumerate}
                \item[\textup{(a)}] $\mathcal{R}(I(G))$ is normal.
                \item[\textup{(b)}] $\mathcal{F}(I(G))$ is normal.
			\item[\textup{(c)}] $\mathcal{R}(I_c(G))$ is normal.
			\item[\textup{(d)}] $\mathcal{F}(I_c(G))$ is normal.
			\item[\textup{(e)}] $G$ satisfies the odd cycle condition.
		\end{enumerate}
	\end{Theorem}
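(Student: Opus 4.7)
The plan is to assemble the theorem from four known sources, without substantive new work. First, the equivalence (a) $\Leftrightarrow$ (e) is the Simis--Vasconcelos--Villarreal theorem \cite{SVV} (see also \cite{RV}), which characterizes when the Rees algebra of an edge ideal is normal.  The equivalence (b) $\Leftrightarrow$ (e) follows from the Ohsugi--Hibi theorem \cite{HO} on normality of the edge ring $K[G]$, via the standard identification $\mathcal{F}(I(G))\cong K[G]$.

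Second, I would establish (b) $\Leftrightarrow$ (d) by an explicit toric isomorphism.  Both fiber cones are semigroup algebras: $\mathcal{F}(I(G))$ is generated by the exponent vectors $(\alpha_e,1)$, while $\mathcal{F}(I_c(G))$ is generated by $(\mathbf{1}-\alpha_e,1)$, where $\alpha_e\in\{0,1\}^n$ is the characteristic vector of an edge $e\in E(G)$.  The $\mathbb{Z}$-linear automorphism $\psi$ of $\mathbb{Z}^{n+1}$ sending $(\beta,k)$ to $(k\mathbf{1}-\beta,k)$ bijects the two generating sets and therefore lifts to a $K$-algebra isomorphism of the corresponding toric rings.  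In particular, they are simultaneously normal, so (b) $\Leftrightarrow$ (d).

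The remaining link (a) $\Leftrightarrow$ (c) is more delicate, because the lattice automorphism $\psi$ does not respect the variables $x_1,\dots,x_n$ that generate the degree-zero part of the Rees algebras.  For this step I would invoke the work of Ansaldi, Lin and Shen \cite{ALS}, which specifically studies Rees algebras of complementary monomial ideals and establishes the transfer of normality between $\mathcal{R}(I)$ and $\mathcal{R}(I_c)$ for squarefree $I$.  Chaining (a) $\Leftrightarrow$ (e), (b) $\Leftrightarrow$ (e), (b) $\Leftrightarrow$ (d) and (a) $\Leftrightarrow$ (c) then closes the cycle.

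The main obstacle, if one wished to avoid citing \cite{ALS}, would be supplying an independent proof of (a) $\Leftrightarrow$ (c): this amounts to comparing the integral closure of the affine semigroup generated by $\{e_i:1\le i\le n\}\cup\{(\alpha_e,1):e\in E(G)\}$ with that generated by $\{e_i\}\cup\{(\mathbf{1}-\alpha_e,1)\}$, a comparison that is not a simple change of lattice coordinates.  This is precisely where the input from \cite{ALS} is essential.
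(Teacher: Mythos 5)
Your overall strategy is the same as the paper's: the theorem is proved there too by assembling known results, with (a) $\Leftrightarrow$ (b) $\Leftrightarrow$ (e) coming from \cite{SVV}, \cite{HO} and \cite{RV}, (b) $\Leftrightarrow$ (d) from the isomorphism $\mathcal{F}(I(G))\cong\mathcal{F}(I_c(G))$, and (a) $\Leftrightarrow$ (c) from a separate duality statement. Two remarks. First, your explicit argument for (b) $\Leftrightarrow$ (d) is correct and self-contained: the map $(\beta,k)\mapsto(k\mathbf{1}-\beta,k)$ is unimodular and carries the semigroup generators $(\alpha_e,1)$ to $(\mathbf{1}-\alpha_e,1)$, so the two fiber cones are isomorphic toric rings and normality transfers; this replaces the paper's citation of \cite[Theorem 3.1]{ALS}, which is exactly the fiber-cone isomorphism. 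Second, and this is the one point you should fix: for (a) $\Leftrightarrow$ (c) you defer entirely to \cite{ALS}, asserting that it ``establishes the transfer of normality between $\mathcal{R}(I)$ and $\mathcal{R}(I_c)$'', but the result actually invoked from \cite{ALS} in this context concerns the special fiber rings, not the Rees algebras, and (as you yourself observe) the normality of $\mathcal{R}(I_c(G))$ cannot be deduced from the fiber-cone isomorphism alone, since the lattice automorphism does not respect the degree-zero variables. The paper instead derives (a) $\Leftrightarrow$ (c) from \cite[Corollary 14.6.36]{RV}, Villarreal's duality result for Rees algebras of complementary (dual) squarefree monomial ideals. So your proof is correct in structure, but the load-bearing citation for (a) $\Leftrightarrow$ (c) should either be verified to exist in \cite{ALS} or be replaced by \cite[Corollary 14.6.36]{RV}; as written, that step rests on an attribution that the rest of your argument does not support.
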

	\begin{proof}
		Let $V(G)=[n]$. Since $I(G)$ and $I_c(G)$ are equigenerated ideals, then $\mathcal{F}(I(G))$ is isomorphic to the edge ring $K[G]=K[x_ix_j:\{i,j\}\in E(G)]$ and similarly $\mathcal{F}(I_c(G))\cong K[{\bf x}_{[n]}/(x_ix_j):\{i,j\}\in E(G)]$. Combining \cite[Corollary 5.8.10]{SVV}  (see also \cite[Corollary 2.3]{HO}) with \cite[Corollary 10.5.6]{RV}, the equivalences (a) $\Leftrightarrow$ (b) $\Leftrightarrow$ (e) follow. Next, by \cite[Theorem 3.1]{ALS}, we have $\mathcal{F}(I(G))\cong\mathcal{F}(I_c(G))$. So, the equivalence (b) $\Leftrightarrow$ (d) follows. Finally, the equivalence (a) $\Leftrightarrow$ (c) follows from \cite[Corollary 14.6.36]{RV}.
	\end{proof}

	For a finite simple graph $G$, we denote by $b(G)$ the number of bipartite connected components of $G$. An isolated vertex of $G$ is regarded as a bipartite connected component of $G$.
	
	Recall that the \textit{analytic spread} of an ideal $I\subset S$ is the Krull dimension of the fiber cone $\mathcal{F}(I)=\mathcal{R}(I)/\m\mathcal{R}(I)$, and it is denoted by $\ell(I)$. If $I\subset S$ is an equigenerated monomial ideal and $\mathcal{G}(I)=\{u_1,\dots,u_m\}$, then $\mathcal{F}(I)\cong K[u_1,\dots,u_m]$ is a toric ring. Let $M=(m_{ij})$ be the $m\times n$ matrix whose $i$th row is the exponent vector of the monomial $u_i$. By \cite[Proposition 3.1]{HHBook2}, we have $\ell(I)=\rank(M)$.
	
	As a consequence of this discussion, \cite[Lemma 10.2.6]{RV} and the isomorphism $\mathcal{F}(I_c(G))\cong\mathcal{F}(I(G))$, we obtain immediately that
	
	\begin{Corollary}\label{Cor:ell-I(G)}
		Let $G$ be a finite simple graph on $n\ge3$ vertices. Then
		$$
		\ell(I_c(G))=\ell(I(G))=n-b(G).
		$$
	\end{Corollary}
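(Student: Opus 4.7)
The plan is to apply the identity $\ell(I)=\rank(M_I)$ from \cite[Proposition 3.1]{HHBook2}, where $M_I$ is the integer matrix whose rows are the exponent vectors of the minimal generators of the equigenerated monomial ideal $I$, and where the rank is the $\mathbb{Q}$-rank (equivalently, the transcendence degree of the toric subalgebra $K[u_1,\dots,u_m]\subseteq S$, which is independent of $\textup{char}\,K$). For $I(G)$ the relevant matrix is the (transposed) vertex-edge incidence matrix $M$ of $G$, with rows $e_a+e_b$ for $\{a,b\}\in E(G)$, and $\rank(M)=n-b(G)$ is the classical fact \cite[Lemma 10.2.6]{RV}. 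For $I_c(G)$ the corresponding matrix $M'$ has rows $\mathbf{1}_n-e_a-e_b$. My plan is to prove $\rank(M')=n-b(G)$ directly via linear algebra, thereby establishing both equalities simultaneously.

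The key observation is that $M'=J-M$, where $J=\mathbf{1}_m\mathbf{1}_n^T$ is the rank-one all-ones $m\times n$ matrix. For $v\in\mathbb{Q}^n$ this gives
$$
M'v\;=\;(\mathbf{1}_n^Tv)\mathbf{1}_m-Mv,
$$
so $v\in\Ker(M')$ if and only if $Mv$ is the specific scalar multiple $(\mathbf{1}_n^Tv)\mathbf{1}_m$ of $\mathbf{1}_m$. Set $U:=M^{-1}(\mathbb{Q}\cdot\mathbf{1}_m)$ and define the linear functional $\lambda:U\to\mathbb{Q}$ by $Mv=\lambda(v)\mathbf{1}_m$. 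Then
$$
\Ker(M')\;=\;\{v\in U:\lambda(v)=\mathbf{1}_n^Tv\}\;=\;\Ker(\varphi|_U),
$$
where $\varphi:=\lambda-\mathbf{1}_n^T$. It therefore suffices to compute $\dim U$ by rank-nullity applied to $M|_U$ and to check that $\varphi$ does not vanish identically on $U$.

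Since every row of $M$ has entry-sum $2$, we have $M\mathbf{1}_n=2\mathbf{1}_m\neq 0$, so $\lambda$ is surjective and $\dim U=\dim\Ker(M)+1=(n-\rank(M))+1=b(G)+1$. For the nontriviality of $\varphi$ on $U$, I would evaluate at the test vector $v_0=\tfrac12\mathbf{1}_n\in U$: then $\lambda(v_0)=1$ and $\mathbf{1}_n^Tv_0=n/2$, whence $\varphi(v_0)=1-n/2\neq 0$, using precisely the hypothesis $n\geq 3$. Therefore $\dim\Ker(M')=\dim U-1=b(G)$, and $\rank(M')=n-b(G)$ as claimed. The hypothesis $n\geq 3$ is the sharp point of the argument, as it is what forces $\varphi(v_0)\neq 0$; the only subtlety to keep in mind is that the ranks involved are $\mathbb{Q}$-ranks of integer matrices, which legitimates the scalar $\tfrac12$ and makes the computation valid regardless of $\textup{char}\,K$.
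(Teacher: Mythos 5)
Your argument is correct, and it pursues the same overall strategy as the paper's independent linear-algebra proof of Corollary \ref{Cor:ell-I(G)}: identify $\ell(I(G))$ and $\ell(I_c(G))$ with the ranks of the exponent matrices $M$ (the incidence matrix) and $M'=J-M$ ($J$ the all-ones matrix) via \cite[Proposition 3.1]{HHBook2}, quote $\rank(M)=n-b(G)$ from \cite[Lemma 10.2.6]{RV}, and show $\rank(M')=\rank(M)$. The execution of that last step is genuinely different, however. The paper isolates Lemma \ref{Lem:matrix}, which asserts the equality of kernels $\Ker(A-B)=\Ker(B)$ when $B$ has constant column sums $b>0$ and $A$ has constant rows and column sums $a>b$, and applies it to the \emph{transposed} matrices, so the comparison takes place in the edge space $\mathbb{R}^m$; the hypothesis $a>b$ there becomes $n>2$, which plays exactly the role of your $n\ge 3$. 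You instead work in the vertex space $\mathbb{Q}^n$ and compute $\dim\Ker(M')$ directly as the kernel of the nonzero functional $\lambda-\mathbf{1}_n^{\top}$ on $U=M^{-1}(\mathbb{Q}\,\mathbf{1}_m)$, obtaining $\dim U-1=b(G)$. This is a slightly slicker dimension count, but note what it does and does not give: on the vertex side the kernels of $M$ and $M'$ are in general \emph{different} subspaces of the same dimension (already for the path on three vertices they are distinct lines), so only the dimension comparison is available there, whereas the paper's transposed formulation yields actual equality of kernels. One small caveat in your write-up: the functional $\lambda$ is well defined only when $E(G)\neq\emptyset$ (so that $\mathbf{1}_m\neq 0$); the edgeless case is degenerate ($I(G)=I_c(G)=0$ and both sides of the claimed equality vanish) and is passed over silently in the paper's proof as well.
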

	
	For the sake of completeness, we provide an independent proof of this result using elementary linear algebra. First, we need the following lemma.
	\begin{Lemma}\label{Lem:matrix}
		Let
		\begin{enumerate}
			\item[(i)] $B=(b_{ij})\in\mathbb{R}^{n\times m}$ be a real matrix such that the sum of the entries of each column is a fixed value $\sum_{i=1}^n b_{ij}=b>0$.
			\item[(ii)] $A=(a_{ij})\in\mathbb{R}^{n\times m}$ be a real matrix such that $a_{ij}=a_{ij'}$ for all $i,j,j'$ and such that the sum of the entries of each column is a fixed value $\sum_{i=1}^n a_{ij}=a>b$.
		\end{enumerate}
		Then $\rank(A-B)=\rank(B)$.
	\end{Lemma}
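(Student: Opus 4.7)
The plan is to convert the rank equality into an equality of kernels of the transposed matrices, i.e., of spaces of linear dependence relations among the columns.

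First I would unpack the hypothesis on $A$: the condition $a_{ij}=a_{ij'}$ for all $i,j,j'$ says that all columns of $A$ are equal to a single vector $c=(c_1,\dots,c_n)^{T}\in\mathbb{R}^{n}$ with $\sum_i c_i = a > 0$, so in particular $c\neq 0$. Writing $b_1,\dots,b_m$ for the columns of $B$, the columns of $A-B$ are then $c-b_1,\dots,c-b_m$.

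Next, I would exploit the positivity of $b$ and $a-b$ to show that every linear relation among the $b_j$'s (respectively among the $c-b_j$'s) automatically has vanishing coefficient sum. Indeed, if $\sum_{j}\lambda_j b_j=0$, summing the coordinates of both sides gives $b\sum_j \lambda_j = 0$, forcing $\sum_j\lambda_j=0$ since $b>0$; the same argument using $a-b>0$ applies to the columns of $A-B$. Once $\sum_j \lambda_j = 0$ is established, the identity
\[
\sum_{j}\lambda_j(c-b_j)\;=\;\Bigl(\sum_{j}\lambda_j\Bigr)c - \sum_{j}\lambda_j b_j
\]
shows that $\sum_j\lambda_j b_j = 0$ if and only if $\sum_j\lambda_j (c-b_j) = 0$. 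Hence the kernels of the transposes of $B$ and of $A-B$ coincide, so $\rank(A-B)=\rank(B)$ by the rank–nullity theorem.

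I expect no serious obstacle here: the argument is essentially a two-line linear-algebra calculation once one notices that the hypotheses $b>0$ and $a>b$ are used precisely to force the vanishing of the coefficient sum in any dependence relation. The only subtlety worth stressing in the write-up is that both positivity conditions are needed (one to pin down dependence relations among the $b_j$, the other for the $c-b_j$), and that without these hypotheses the conclusion can fail.
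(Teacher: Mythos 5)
Your argument is correct and is essentially the paper's own proof: both establish $\Ker(A-B)=\Ker(B)$ by using the positivity of $b$ and of $a-b$ to force the coefficient sum of any dependence relation to vanish, then using that all columns of $A$ coincide to pass between $\sum_j\lambda_j b_j=0$ and $\sum_j\lambda_j(c-b_j)=0$, and conclude by rank--nullity. One wording slip: the space of linear dependence relations among the columns of $B$ is $\Ker(B)$ itself, not the kernel of the transpose; your actual computation does work with $\Ker(B)$ and $\Ker(A-B)$, which is exactly what the rank--nullity step requires, so the mathematics is unaffected.
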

	\begin{proof}
		By the Rank-Nullity Theorem we have $\rank(A-B)=m-\dim\Ker(A-B)$ and $\rank(B)=m-\dim\Ker(B)$. So, it is enough to show that $\Ker(A-B)=\Ker(B)$.
        
        Let ${\bf y}\in\Ker(A-B)$, then $(A-B){\bf y}={\bf 0}$. This means that
		\begin{equation}\label{eq:system}
			\sum_{j=1}^m(a_{ij}-b_{ij})y_j=0,\quad\textup{for all}\ i=1,\dots,n.
		\end{equation}
		Summing over $i$, we obtain
		$$
		0=\sum_{i=1}^n\sum_{j=1}^m(a_{ij}-b_{ij})y_j=\sum_{j=1}^m(\sum_{i=1}^na_{ij}-\sum_{i=1}^nb_{ij})y_j=\sum_{j=1}^m(a-b)y_j=(a-b)(\sum_{j=1}^my_j).
		$$
		Since $a>b$, then $a-b>0$ and so $y_1+\dots+y_m=0$. Combining this fact with equation (\ref{eq:system}) and the assumption in (ii) that $a_{ij}=a_{ij'}$ for all $i,j,j'$, we see that
		$$
		0=-\sum_{j=1}^m(a_{ij}-b_{ij})y_j=-a_{i1}(\sum_{j=1}^my_j)+\sum_{j=1}^mb_{ij}y_j=\sum_{j=1}^mb_{ij}y_j,
		$$
		for all $i=1,\dots,n$. Hence ${\bf y}\in\Ker(B)$.
		
		Conversely, let ${\bf y}\in\Ker(B)$. Then
		\begin{equation}\label{eq:system2}
			\sum_{j=1}^mb_{ij}y_j=0,\quad\textup{for all}\ i=1,\dots,n.
		\end{equation}
		Summing these equations over $i$, we obtain that $b(y_1+\dots+y_m)=0$. Since $b>0$, we see that $y_1+\dots+y_m=0$. Using this fact, the equation (\ref{eq:system2}), and the assumption in (ii) that $a_{ij}=a_{ij'}$ for all $i,j,j'$, we obtain that
		$$
		\sum_{j=1}^m(a_{ij}-b_{ij})y_j=a_{i1}(\sum_{j=1}^my_j)-(\sum_{j=1}^mb_{ij}y_j)=0,
		$$
		for all $i=1,\dots,n$. Hence ${\bf y}\in\Ker(A-B)$.
	\end{proof}
	
	We are now ready to prove Corollary \ref{Cor:ell-I(G)}.
	\begin{proof}[Proof of Corollary \ref{Cor:ell-I(G)}]
		Let $V(G)=[n]$, $E(G)=\{e_1,\dots,e_m\}$, and let $B=(b_{ij})$ be the \textit{incidence matrix} of $G$. That is, the $m\times n$-matrix defined by
		$$
		b_{ij}\ =\ \begin{cases}
			1&\textup{if}\ j\in e_i,\\
			0&\textup{if}\ j\notin e_i.
		\end{cases}
		$$
        
		Using that $\mathcal{F}(I(G))\cong K[x_ix_j:\{i,j\}\in E(G)]$, by \cite[Proposition 3.1]{HHBook2}, we have $\ell(I(G))=\rank(B)$. Let $A$ be the $m\times n$-matrix whose all entries are 1's. Similarly, we have $\ell(I_c(G))=\rank(A-B)$ because $\mathcal{F}(I_c(G))\cong K[{\bf x}_{[n]}/(x_ix_j):\{i,j\}\in E(G)]$. The conditions (i)-(ii) in Lemma \ref{Lem:matrix} are satisfied for $A^\top$ and $B^\top$, where $C^\top$ is the transpose of a matrix $C$. Hence $$\rank(A-B)=\rank((A-B)^\top)=\rank(A^\top-B^\top)=\rank(B^\top)=\rank(B),$$
        and so $\ell(I(G))=\ell(I_c(G))$.
		
		Finally, it remains to show that $\ell(I(G))=\rank(B)=n-b(G)$. This is well-known (see \cite[Lemma 10.2.6]{RV}). We sketch a short argument. Let $G=G_1\sqcup\cdots\sqcup G_t\sqcup G_{t+1}$, where each $G_i$, $1\leq i\leq t$, is a connected component of $G$ with at least two vertices, and $G_{t+1}$ consists of the isolated vertices of $G$. Then, up to relabeling, $B$ is a diagonal block matrix
		$$
		B\ =\ \left(\begin{array}{cccc}
			B_1&&&{\Huge{\bf 0}}\\
			&B_2&&\\
			&&\ddots&\\
			{\Huge{\bf 0}}&&&B_t
		\end{array}\right)
		$$
		where each $B_i$ is the incidence matrix of $G_i$. Then $\rank(B)=\sum_{i=1}^t\rank(B_i)$. Since $b(G)=(\sum_{i=1}^t b(G_i))+|V(G_{t+1})|$ and $n=|V(G)|=\sum_{i=1}^{t+1}|V(G_{i})|$, we may assume that $G$ is connected. Hence, by the Rank-Nullity Theorem, it is enough to show that $\dim\Ker(B)=1$ if $G$ is bipartite, and $\dim\Ker(B)=0$ otherwise. Notice that the system $B{\bf y}=(0,\dots,0)$ can be rewritten as the system of equations
		\begin{equation}\label{eq:system-y}
			y_p+y_q\ =\ 0,\quad \textup{for}\ e=\{p,q\}\in E(G).
		\end{equation}
		
		\textbf{Case 1.} Assume that $G$ is a connected bipartite graph with vertex bipartition $V(G)=V_1\sqcup V_2$. We claim that $\dim\Ker(B)=1$. To this end, let $v,v'\in V_1$ be distinct. Let ${\bf y}=(y_1,\dots,y_n)^\top\in\Ker(B)$. Since $G$ is connected, we can find a path $v=v_0,v_1,\dots,v_{r-1},v_r=v'$ in $G$ connecting $v$ with $v'$. Since $G$ is bipartite and $v_0=v\in V_1$, then $v_1\in V_2$. For the same reason, $v_2\in V_1$. Therefore, $v_i\in V_1$ if $i$ is even and $v_i\in V_2$ if $i$ is odd. Since $v_r=v'\in V_1$, we see that $r$ is even. Using the system (\ref{eq:system-y}), we see that $y_v=y_{v'}$. By symmetry, $y_{v}=y_{v'}$ for all $v,v'\in V_2$. Up to relabeling, we may assume that $V_1=\{1,\dots,t\}$ and $V_2=\{t+1,\dots,n\}$. Let $e\in E(G)$. Since $G$ is bipartite, $e=\{i,j\}$ with $1\le i\le t$ and $t+1\le j\le n$. Our discussion shows that $y_1=\dots=y_t$ and $y_{t+1}=\dots=y_{n}$ and $y_i+y_j=0$. It follows that $y_j=-y_i$ for all $i\in V_1$ and $j\in V_2$. Hence
		$$
		\Ker(B)=\{(a,\dots,a,-a,\dots,-a)^\top\in\mathbb{R}^{1\times n}:\ a\in\mathbb{R}\},
		$$
		and consequently $\dim\Ker(B)=b(G)=1$.\smallskip
		
		\textbf{Case 2.} Suppose that $G$ is a connected non-bipartite graph. By \cite[Lemma 9.1.1]{HHBook}, $G$ contains an odd cycle $C$. Say $E(C)=\{\{1,2\},\{2,3\},\dots,\{2s,2s+1\},\{2s+1,1\}\}$ with $s\ge1$. Let ${\bf y}=(y_1,\dots,y_n)^\top\in\Ker(B)$. Then, (\ref{eq:system-y}) implies that
		$$
		y_{i}+y_{i+1}\ =\ 0,\quad \textup{for}\ i=1,\dots,2s+1,
		$$
		where $y_{2s+2}=y_1$. From these equations, we have $y_{i}=y_{i+2}$ for all $i=1,\dots,2s$. Since $C$ is an odd cycle, $y_1=y_2=\cdots=y_{2s+1}$. Hence $2y_1=0$ and so $y_1=\dots=y_{2s+1}=0$. If $V(C)=V(G)$, then $\Ker(B)$ is the null space and so $\dim\Ker(B)=0$. Otherwise, let $v\in V(G)\setminus V(C)$. Since $G$ is connected, we can find a path in $G$, say $v=v_0,v_1,\dots,v_{r-1},v_r=1$, 
        with $\{v_i,v_{i+1}\}\in E(G)$ for $i=0,\dots,r-1$, connecting $v$ to $1$. Let $r$ be even. Using the system (\ref{eq:system-y}), we see that $y_{v_0}=y_{v_2}=\dots=y_{v_r}=y_1$. Otherwise, let $r$ be odd, we have $y_v=y_{v_0}=y_{v_{r-1}}$. Since $\{v_{r-1},v_r\}\in E(G)$ and $\{v_r,2\}=\{1,2\}\in E(G)$, the system (\ref{eq:system-y}) implies that $y_{v_{r-1}}=y_2$. But $y_2=y_1$ and so $y_{v}=y_{v_{r-1}}=y_1$. So, $y_i=y_1=0$ for all $i\in V(G)$. Hence $\Ker(B)$ is the null space and so $\dim\Ker(B)=b(G)=0$, as claimed.
	\end{proof}

	\section{The limit depth $\lim\limits_{k\rightarrow\infty}\depth S/I_c(G)^k$}\label{sec4}
	
	Recall that, by \cite{Br}, the limit $\lim_{k\rightarrow\infty}\depth S/I^k$ exists for any ideal $I\subset S$. That is, $\depth S/I^k=\depth S/I^{k+1}$ for all $k\gg0$. The least integer $k_0>0$ for which $\depth S/I^k=\depth S/I^{k_0}$ for all $k\ge k_0$, is called the \textit{index of depth stability} of $I$ and is denoted by $\dstab\,I$.
	
	The main aim of this section is to prove the following theorem. 
	
	\begin{Theorem}\label{Thm:limitdepth-Ic(G)}
		Let $G$ be a finite simple graph with $n$ vertices. Then
		$$
		\lim_{k\rightarrow\infty}\depth S/I_c(G)^k\ =\ b(G),
		$$
		and $\dstab\,I_c(G)\le n-c(G)-1$.
	\end{Theorem}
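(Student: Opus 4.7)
The upper bound $\lim_{k\to\infty}\depth S/I_c(G)^k \le b(G)$ is already recorded in the Introduction, via \cite[Proposition~10.3.2]{HHBook} and Corollary~\ref{Cor:ell-I(G)}. So the task is to establish the matching lower bound and the stability index estimate. The first step is a reduction to the case where $G$ has no isolated vertex. If $v\in V(G)$ is isolated, every generator of $I_c(G)$ is divisible by $x_v$, and in fact $I_c(G)^k = x_v^k\, I_c(G-v)^k S$, where $I_c(G-v)\subset S':=K[x_i:i\ne v]$. This yields the short exact sequence
\begin{equation*}
0 \longrightarrow S/I_c(G-v)^k S \xrightarrow{\,\cdot\, x_v^k\,} S/I_c(G)^k \longrightarrow S/(x_v^k) \longrightarrow 0 .
\end{equation*}
Since $S/I_c(G-v)^k S\cong K[x_v]\otimes_K S'/I_c(G-v)^k$ has depth $1+\depth S'/I_c(G-v)^k$, $\depth S/(x_v^k)=n-1$, $b(G)=b(G-v)+1$ and $c(G)=c(G-v)$, the Depth Lemma (for the lower bound) together with the standard inclusion $\Ass(S/I_c(G-v)^k S)\subseteq \Ass(S/I_c(G)^k)$ extracted from the same sequence (for the upper bound on depth via an associated prime of height $n-b(G)$) reduces the full statement, \emph{including} the stability estimate, inductively to graphs with no isolated vertex.

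Now assume $G$ has no isolated vertex, and let $G_1,\dots,G_{b(G)}$ be the bipartite components of $G$ with bipartitions $V(G_i)=A_i\sqcup B_i$. I plan to establish the lower bound by exhibiting an explicit regular sequence of length $b(G)$ on $S/I_c(G)^k$. For each $i$ set
\begin{equation*}
f_i \;=\; \sum_{v\in A_i} x_v \;-\; \sum_{v\in B_i} x_v \;\in\; S_1.
\end{equation*}
The structural observation powering the analysis is that $S$ carries a $\mathbb{Z}^{b(G)}$-grading with $\deg x_v = e_i$ for $v\in A_i$, $\deg x_v = -e_i$ for $v\in B_i$, and $\deg x_v=0$ for $v$ in a non-bipartite component. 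Every generator of $I_c(G)$ has the same multidegree $\sum_i(|A_i|-|B_i|)e_i$, so $I_c(G)^k$ is multi-homogeneous, and consequently all of its associated primes are multi-graded monomial primes. In this multi-grading each $f_i$ decomposes as two multi-homogeneous components of degrees $+e_i$ and $-e_i$, so $f_i$ is contained in a monomial prime $P$ if and only if $P\supseteq (x_v:v\in V(G_i))$.

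Regularity of $f_1,\dots,f_{b(G)}$ therefore reduces to the combinatorial claim that, for every $k\ge n-c(G)-1$, no associated prime of $I_c(G)^k+(f_1,\dots,f_{i-1})$ contains the full vertex-variable set of some bipartite component $G_i$. I intend to verify this by (i) describing $\Ass(S/I_c(G)^k)$ combinatorially, building on the minimal-prime analysis preceding Section~\ref{sec3} (minimal primes are supported on isolated vertices, non-edges, and triangles), and (ii) controlling embedded primes of powers via the colon ideals $(I_c(G)^k:m)$ for monomials $m\in S$. The same analysis will simultaneously produce, for $k\ge n-c(G)-1$, an associated prime of $I_c(G)^k$ of the form $\mathfrak{p}_{\mathbf{w}}=(x_v : v\notin\{w_1,\dots,w_{b(G)}\})$ with $w_i\in V(G_i)$, of height $n-b(G)$, yielding the matching upper bound $\depth S/I_c(G)^k\le b(G)$ at those powers and completing both the identity $\lim_k\depth S/I_c(G)^k = b(G)$ and the estimate $\dstab I_c(G)\le n-c(G)-1$. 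The main obstacle is the combinatorial tracking of the embedded primes in (ii) and the identification of the precise threshold $n-c(G)-1$; its sharpness for path graphs will be handled separately in Proposition~\ref{Prop:Pn}.
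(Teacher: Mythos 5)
There is a genuine gap, and it sits exactly where your plan defers the work. First, your reduction of the regularity of $f_1,\dots,f_{b(G)}$ to a statement about monomial primes is only valid for the first element of the sequence. The criterion ``$f_i\in P$ iff $P\supseteq(x_v:v\in V(G_i))$'' holds because the associated primes of the \emph{monomial} ideal $I_c(G)^k$ are generated by variables; but for $i\ge 2$ you must avoid the associated primes of $I_c(G)^k+(f_1,\dots,f_{i-1})$, which is neither a monomial ideal nor homogeneous for your $\mathbb{Z}^{b(G)}$-grading (each $f_j$ mixes the degrees $+e_j$ and $-e_j$), so those primes need not be monomial and the criterion no longer applies. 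Note also that the multigrading is not what forces $\Ass(S/I_c(G)^k)$ to consist of monomial primes --- that is just the standard fact for monomial ideals --- and it gives you no handle on the later quotients. Second, even for the first step the real content is the claim that for $k\ge n-c(G)-1$ no associated prime of $I_c(G)^k$ contains all the variables of a bipartite component, together with the existence of an associated prime of height $n-b(G)$ at those powers; both are announced (``I intend to verify\dots'') but not proved, and there is no ``minimal-prime analysis preceding Section~\ref{sec3}'' in the paper to lean on --- controlling the \emph{embedded} primes of all powers is precisely the hard part, and nothing in your sketch indicates how the threshold $n-c(G)-1$ would emerge from it.

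For comparison, the paper never attempts a description of $\Ass(S/I_c(G)^k)$. The lower bound in the connected bipartite case comes for free from normality (odd cycle condition) and hence Cohen--Macaulayness of $\mathcal{R}(I_c(G))$, which pins the limit at $n-\ell(I_c(G))=b(G)$; the vanishing of the limit in the connected non-bipartite case is obtained from one explicit witness, $u=(x_1\cdots x_{2s+1})^{s-1}$ with $I_c(C_{2s+1})^s:(u)=\mathfrak{m}$, propagated by vertex removal (Lemma \ref{Lem:removal}) and the linear-quotients bookkeeping of Lemma \ref{Lem:I^k-lin-quot}; and the disconnected case is handled by induction on $c(G)$ through the depth formula of \cite[Theorem 4.1]{FM1} for $I_c(G_1\sqcup G_2)$, which is what actually produces the bound $n-c(G)-1$. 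If you want to salvage your route, you would need either a genuinely different mechanism to verify the regular-sequence claim past $i=1$ (e.g.\ a filtration or specialization argument not relying on monomiality), or to prove the $\Ass$ statements you postulate --- and at that point you would in effect be redoing the paper's inductive analysis in a harder form.
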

	
	The proof of this result requires some preparation.\smallskip
	
	Recall that a monomial ideal $I\subset S$ has \textit{linear quotients} if there exists an order $u_1,\dots,u_m$ on the minimal generating set $\mathcal{G}(I)$ of $I$ such that $(u_1,\dots,u_{i-1}):(u_i)$ is generated by variables, for all $i=2,\dots,m$. We put
	$$
	\set_{I}(u_j)\ =\ \{i:\ x_i\in(u_1,\dots,u_{j-1}):(u_j)\},
	$$
	for $j=2,\dots,m$, and $\set_I(u_1)=\emptyset$.
	
	\begin{Lemma}\label{Lem:I^k-lin-quot}
		Let $I\subset S$ be an equigenerated monomial ideal. Suppose that $I^k$ has linear quotients with respect to the lexicographic monomial order $>_{\lex}$ induced by $x_1>\dots>x_n$, for all $k\ge1$. Then,
		\begin{enumerate}
			\item[\textup{(a)}] $\set_{I^k}(u)\subset[n-1]$, for all $u\in\mathcal{G}(I^k)$ and all $k\ge1$.
			\item[\textup{(b)}] $\set_{I^k}(u)\cup\set_{I^\ell}(v)\subset\set_{I^{k+\ell}}(uv)$, for all $u\in\mathcal{G}(I^k)$ and $v\in\mathcal{G}(I^\ell)$.
			\item[\textup{(c)}] $\depth S/I^k=0$, if and only if, $\set_{I^k}(u)=[n-1]$, for some $u\in\mathcal{G}(I^k)$.
			\item[\textup{(d)}] Suppose that $\lim_{k\rightarrow\infty}\depth S/I^k=n-|\bigcup_{u\in\mathcal{G}(I)}\set_{I}(u)|-1$. Then
			$$
			\dstab\,I\ \le\ \min\big\{|A|:A\subset\mathcal{G}(I),\ \bigcup_{u\in\mathcal{G}(I)}\set_I(u)=\bigcup_{v\in A}\set_I(v)\big\}\ \le\ \big|\!\bigcup_{u\in\mathcal{G}(I)}\set_{I}(u)\big|.
			$$
		\end{enumerate}
	\end{Lemma}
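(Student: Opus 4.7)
The plan is to treat the four parts sequentially: (a) and (b) reduce to short lex-order computations, (c) is a consequence of the Herzog--Takayama formula combined with (a), and (d) is the substantive part, built on a monotonicity principle extracted from (b).

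For (a), I would argue by contradiction. If $n \in \set_{I^k}(u)$, there is $u' \in \mathcal{G}(I^k)$ preceding $u$ in the linear quotients order---hence $u' >_{\lex} u$---with $u' \mid x_n u$. Equigeneratedness of $I^k$ forces $u' = x_n u / x_i$ for some $x_i \mid u$. The case $i = n$ gives $u' = u$, and $i < n$ makes $u' <_{\lex} u$ in a lex order with $x_1 > \dots > x_n$, since $u'$ loses an $x_i$ and gains an $x_n$; both options contradict $u' >_{\lex} u$. Part (b) runs the same principle forward: a witness $u' >_{\lex} u$ with $u' \mid x_i u$ produces $u'v >_{\lex} uv$ (multiplication by $v$ preserves the lex comparison) with $u'v \mid x_i uv$, and equigeneratedness places $u'v$ in $\mathcal{G}(I^{k+\ell})$, so $i \in \set_{I^{k+\ell}}(uv)$. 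The symmetric case starting from $i \in \set_{I^\ell}(v)$ is identical.

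For (c), I would invoke the Herzog--Takayama formula $\pd S/J = 1 + \max_{u \in \mathcal{G}(J)} |\set_J(u)|$, valid for any monomial ideal $J$ with linear quotients. By Auslander--Buchsbaum, $\depth S/I^k = n - 1 - \max_u |\set_{I^k}(u)|$, so $\depth S/I^k = 0$ is equivalent to some generator attaining $|\set_{I^k}(u)| = n - 1$. Since $\set_{I^k}(u) \subset [n-1]$ by (a), this forces $\set_{I^k}(u) = [n-1]$.

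Part (d) is the heart of the argument. The key observation is that (b) yields monotonicity: picking $w_0 \in \mathcal{G}(I^k)$ maximizing $|\set_{I^k}(\cdot)|$ and any $v \in \mathcal{G}(I^\ell)$, the product $w_0 v \in \mathcal{G}(I^{k+\ell})$ satisfies $|\set_{I^{k+\ell}}(w_0 v)| \ge |\set_{I^k}(w_0)|$. Hence $k \mapsto \max_w |\set_{I^k}(w)|$ is non-decreasing and $k \mapsto \depth S/I^k$ is non-increasing. Setting $S_0 = \bigcup_{u \in \mathcal{G}(I)} \set_I(u)$, the hypothesis reads $\lim_k \max_w |\set_{I^k}(w)| = |S_0|$, and monotonicity upgrades this to $\max_w |\set_{I^k}(w)| \le |S_0|$ for every $k$. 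Now given $A = \{v_1, \dots, v_r\} \subset \mathcal{G}(I)$ with $\bigcup_i \set_I(v_i) = S_0$, the product $w = v_1 \cdots v_r \in \mathcal{G}(I^r)$ has $\set_{I^r}(w) \supset S_0$ by iterated use of (b). The global bound then forces $|\set_{I^r}(w)| = |S_0|$, so the depth has already reached its limit value at step $r$, giving $\dstab I \le r$. The second inequality is immediate by picking, for each $i \in S_0$, a single generator of $I$ whose set contains $i$. The main thing to watch is the monotonicity argument, which is the one non-formal step; the rest is bookkeeping.
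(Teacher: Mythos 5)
Your proof is correct and follows essentially the same route as the paper: parts (a)--(c) are the same lex-order computations plus the linear-quotients formula $\depth S/I^k=n-1-\max_u|\set_{I^k}(u)|$, and in (d) you use the same construction of multiplying together a minimal family of generators whose sets cover $\bigcup_{u\in\mathcal{G}(I)}\set_I(u)$ and invoking (b). The only (minor) difference is that you derive the non-increasing behaviour of $k\mapsto\depth S/I^k$ directly from (b) and the set formula, whereas the paper cites the linear-powers result \cite[Proposition 10.3.4]{HHBook}; your variant is self-contained and equally valid.
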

	\begin{proof}
		(a) Let $u\in\mathcal{G}(I^k)$. Then $i\in\set_{I^k}(u)$, if and only if, $u'=x_i(u/x_j)\in\mathcal{G}(I^k)$ and $u'>_{\lex}u$, for some $j$. Therefore, $x_i>x_j$, i.e., $i<j$. Hence, $\set_{I^k}(u)\subset[n-1]$.
        \smallskip
		
		(b) Let $u\in\mathcal{G}(I^k)$ and $v\in\mathcal{G}(I^\ell)$. If $i\in\set_{I^k}(u)$, then $u'=x_i(u/x_j)\in\mathcal{G}(I^k)$ for some $j>i$. Hence $u'v>_{\lex}uv$ and $u'v\in\mathcal{G}(I^{k+\ell})$. This shows that $i\in\set_{I^{k+\ell}}(uv)$. Similarly, $\set_{I^\ell}(v)\subset\set_{I^{k+\ell}}(uv)$. 
		 \smallskip
         
		(c) By \cite[Corollary 8.2.2]{HHBook}, the Auslander-Buchsbaum formula and the assumption that $I^k$ has linear quotients, we have $\depth S/I^k=\min_{u\in\mathcal{G}(I^k)}\{n-|\set_{I^k}(u)|-1\}$. Combining this fact with (a), we see that $\depth S/I^k=0$, if and only if, there exists $u\in\mathcal{G}(I^k)$ such that $\set_{I^k}(u)=[n-1]$.

         \smallskip
         
		(d) Let $s$ be the minimum cardinality of a subset $A=\{u_{1},\dots,u_{s}\}$ of $\mathcal{G}(I)$ such that $\bigcup_{u\in\mathcal{G}(I)}\set_I(u)=\bigcup_{i=1}^{s}\set_I(u_i)$. Put $v=u_1\cdots u_s$. By (b), $\set_{I^s}(v)$ contains $\bigcup_{u\in\mathcal{G}(I)}\set_I(u)$. So, by \cite[Corollary 8.2.2]{HHBook}, $\depth S/I^s\le n-|\bigcup_{u\in\mathcal{G}(I)}\set_{I}(u)|-1$. Since $I$ has linear powers, by \cite[Proposition 10.3.4]{HHBook} the function $k\mapsto\depth S/I^k$ is non-increasing. Using this, the previous inequality and the assumption, we have
		\begin{align*}
			n-\big|\!\bigcup_{u\in\mathcal{G}(I)}\set_{I}(u)\big|-1\ &\ge\ \depth S/I^s\ge\depth S/I^{s+1}\ge\depth S/I^{s+2}\ge\cdots\\[-6pt]
			&\ge\ \lim_{k\rightarrow\infty}\depth S/I^k\ =\ n-\big|\!\bigcup_{u\in\mathcal{G}(I)}\set_{I}(u)\big|-1.
		\end{align*}
		Hence $\depth S/I^k=n-|\bigcup_{u\in\mathcal{G}(I)}\set_{I}(u)|-1$ for all $k\ge s$, and so
		$$
		\dstab\,I\ \le\ s\ \le\ \big|\!\bigcup_{u\in\mathcal{G}(I)}\set_{I}(u)\big|,
		$$
		as desired.
	\end{proof}
	
	Now, we treat the case of connected bipartite graphs.
	\begin{Proposition}\label{Prop:dstab-conn}
		Let $G$ be a connected bipartite graph on $n\geq 3$ vertices. Then $\dstab\,I_c(G)\le n-2$, and
        $$
        \lim_{k\rightarrow\infty}\depth\,S/I_c(G)^k=1.
        $$
	\end{Proposition}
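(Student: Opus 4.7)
The plan is to combine the analytic-spread upper bound from Corollary~\ref{Cor:ell-I(G)} with a matching lower bound coming from a bipartite parity obstruction, and then to invoke Lemma~\ref{Lem:I^k-lin-quot}(d) to extract the stability estimate. Since $G$ is connected bipartite, $b(G)=1$, so Corollary~\ref{Cor:ell-I(G)} yields $\ell(I_c(G))=n-1$, and \cite[Proposition 10.3.2]{HHBook} immediately gives $\lim_{k\to\infty}\depth\,S/I_c(G)^k\le 1$.

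To set up the matching lower bound, I would choose a labeling of $V(G)$ with $\{n-1,n\}\in E(G)$ and $G[r,r+1,\dots,n]$ connected for every $r\le n-1$. Such a labeling exists because $G$ is connected: pick any edge $\{u,v\}\in E(G)$, declare $n-1=u$ and $n=v$, and grow outward by reverse BFS from $\{u,v\}$. Bipartiteness of $G$, together with $\{n-1,n\}\in E(G)$, forces $n-1$ and $n$ to lie in different parts of the bipartition $V(G)=V_1\sqcup V_2$. With this labeling the linear-quotients arguments of \cite{FM1} (as used in the proofs of Theorem~\ref{tree} and Theorem~\ref{unicylic}) yield that $I_c(G)^k$ has linear quotients with respect to the lex order induced by $x_1>\cdots>x_n$ for every $k\ge1$, so that Lemma~\ref{Lem:I^k-lin-quot} applies.

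The central claim is that $\set_{I_c(G)^k}(u)\subset[n-2]$ for every $u\in\mathcal{G}(I_c(G)^k)$ and every $k\ge1$. Suppose for contradiction that $n-1\in\set_{I_c(G)^k}(u)$. Writing $u=u_{f_1}\cdots u_{f_k}$ and $c_r=|\{i:r\in f_i\}|$, the monomial $v=x_{n-1}u/x_n$ lies in $\mathcal{G}(I_c(G)^k)$, hence $v=u_{f'_1}\cdots u_{f'_k}$ for some edges $f'_1,\dots,f'_k$ of $G$, and the corresponding degree sequence $(c'_r)$ satisfies $c'_{n-1}=c_{n-1}-1$, $c'_n=c_n+1$ and $c'_r=c_r$ otherwise. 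Because $G$ is bipartite, every edge contributes one unit to each of $\sum_{r\in V_1}c_r$ and $\sum_{r\in V_2}c_r$, forcing both sums to equal $k$ for any realization by a multiset of $k$ edges; but moving a unit from $n-1$ to $n$ across the bipartition changes them to $k\pm1$, a contradiction. Hence $\set_{I_c(G)^k}(u)\subset[n-2]$ and, via Auslander--Buchsbaum, $\depth\,S/I_c(G)^k\ge n-1-(n-2)=1$ for all $k$, so $\lim_{k\to\infty}\depth\,S/I_c(G)^k=1$.

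For the stability estimate I apply Lemma~\ref{Lem:I^k-lin-quot}(d). The $k=1$ case of the central claim gives $\bigcup_{f\in E(G)}\set_{I_c(G)}(u_f)\subset[n-2]$; for the reverse inclusion, given $i\le n-2$, pick a neighbor $\alpha>i$ of $i$ (which exists since $G[i,\dots,n]$ is connected): if $\alpha\le n-1$ then $\alpha$ has a forward neighbor $j>\alpha$, and the swap $u_{\{i,\alpha\}}\mapsto u_{\{\alpha,j\}}=x_iu_{\{i,\alpha\}}/x_j$ shows $i\in\set(u_{\{i,\alpha\}})$; if $\alpha=n$, the edge $\{n-1,n\}\in E(G)$ supplies $j=n-1>i$, giving $i\in\set(u_{\{i,n\}})$. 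Thus $\bigcup\set=[n-2]$ and Lemma~\ref{Lem:I^k-lin-quot}(d) yields $\dstab\,I_c(G)\le|\bigcup\set|=n-2$. The main obstacle is the bipartite parity obstruction, which crucially relies on the edge $\{n-1,n\}$ joining the two parts of the bipartition; compatibility of this ad hoc labeling with the linear-quotients structure for powers inherited from \cite{FM1} also needs to be verified.
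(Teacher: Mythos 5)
Your proposal is correct, but it takes a genuinely different route from the paper's at the two key points. The paper obtains $\lim_{k\to\infty}\depth S/I_c(G)^k=1$ by observing that a bipartite graph trivially satisfies the odd cycle condition, so $\mathcal{R}(I_c(G))$ is normal (Theorem \ref{Thm:Fiber-IcG}), hence Cohen--Macaulay, and then equality holds in \cite[Proposition 10.3.2]{HHBook} together with Corollary \ref{Cor:ell-I(G)}; and it proves $\bigcup_{u\in\mathcal{G}(I_c(G))}\set_{I_c(G)}(u)=[n-2]$ by induction on $n$ (deleting the vertex $1$), where the exclusion of $n-1$ from the union is \emph{indirect}: if the union were $[n-1]$, Lemma \ref{Lem:I^k-lin-quot}(b)--(c) would force the limit depth to be $0$, contradicting the value $1$ already computed. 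You instead prove the exclusion of $n-1$ \emph{directly} and for every power, via the bipartite count $\sum_{r\in V_1}c_r=k$ for any factorization of a generator of $I_c(G)^k$ into $k$ complementary edge generators (the only implicit step being that $n-1\in\set_{I_c(G)^k}(u)$ forces the exchanged variable to be $x_n$, exactly the characterization used in the proof of Lemma \ref{Lem:I^k-lin-quot}(a)); this yields $\depth S/I_c(G)^k\ge 1$ for all $k$, so you only need the unconditional inequality $\lim\le n-\ell(I_c(G))=1$ rather than normality or Cohen--Macaulayness of the Rees algebra, and your reverse inclusion $[n-2]\subseteq\bigcup\set_{I_c(G)}(u)$ is a one-step argument via forward neighbors instead of induction. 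What each buys: your argument is more elementary and self-contained (no appeal to Theorem \ref{Thm:Fiber-IcG} or \cite[Theorem B.6.2]{HHBook}) and gives the extra information that every power has positive depth, while the paper's version reuses machinery it needs anyway and runs parallel to the non-bipartite case (Proposition \ref{Prop:nonbip}). Finally, your closing worry about the labeling is moot: the only hypothesis needed from \cite{FM}, \cite{FM1} for linear quotients of all powers is that $G[r,r+1,\dots,n]$ be connected for every $r$, and applying this with $r=n-1$ already forces $\{n-1,n\}\in E(G)$, so the paper's standard labeling automatically has the property your parity argument requires.
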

	\begin{proof}
		Let $V(G)=[n]$. Since $G$ is bipartite, by \cite[Lemma 9.1.1]{HHBook}it does not contain induced odd cycles. Hence $G$ satisfies the odd cycle condition. Theorem \ref{Thm:Fiber-IcG} implies that $\mathcal{R}(I_c(G))$ is normal, and hence it is Cohen-Macaulay by \cite[Theorem B.6.2]{HHBook}. This combined with \cite[Proposition 10.3.2]{HHBook} and Corollary \ref{Cor:ell-I(G)}, implies that
        $$
        \lim_{k\rightarrow\infty}\depth S/I_c(G)^k=n-\ell(I_c(G))=b(G)=1.
        $$
        Next, we may assume that $G_r=G[r,r+1,\dots,n]$ is connected for all $r=1,\dots,n$, see the proof of \cite[Theorem 3.1(b)]{FM}. By \cite[Theorem 3.1(b)]{FM} and \cite[Remark 3.3]{FM}, $I_c(G)^k$ has linear quotients with respect to the lexicographic order $>_{\lex}$ induced by $x_1>\dots>x_n$, for all $k\ge1$. Proceeding by induction on $n\ge3$, we will show that 
    \begin{equation}\label{UnionSets}
        \bigcup_{u\in\mathcal{G}(I_c(G))}\set_{I_c(G)}(u)\ =\ [n-2].
    \end{equation}
        Since $\lim_{k\rightarrow\infty}\depth S/I_c(G)^k=1$, having (\ref{UnionSets}) together with Lemma \ref{Lem:I^k-lin-quot}(d) will imply that $\dstab\,I_c(G)\le n-2$, as desired.
		
		For the base case $n=3$, we have that $G=P_3$ is a path on three vertices, $I_c(G)=(x_1,x_3)$ and so $\bigcup_{u\in\mathcal{G}(I_c(G))}\set_{I_c(G)}(u)=\set_{I_c(G)}(x_3)=\{1\}=[n-2]$.
		
		Now, let $n>3$. Notice that $H=G\setminus\{1\}$ is again connected and bipartite on $n-1$ vertices. Therefore by induction $\bigcup_{u\in\mathcal{G}(I_c(H))}\set_{I_c(H)}(u)=\{2,3,\dots,n-2\}$. Notice that for any $u\in\mathcal{G}(I_c(H))$, we have $x_1u\in\mathcal{G}(I_c(G))$ and $\set_{I_c(G)}(x_1u)$ contains $\set_{I_c(H)}(u)$. Therefore, $\bigcup_{u\in\mathcal{G}(I_c(G))}\set_{I_c(G)}(u)$ contains $\{2,3,\dots,n-2\}$. Since $G$ is connected, we have $\{1,p\}\in E(G)$ for some $p>1$. Since $G_2=G[2,\dots,n]$ is connected on $n-1\ge2$ vertices and $p\in V(G_2)$ we have $\{p,q\}\in E(G)$ for some $q>1$. Notice that $u={\bf x}_{[n]}/(x_px_q)>_{\lex}{\bf x}_{[n]}/(x_1x_p)=v$, both $u,v\in\mathcal{G}(I_c(G))$, and $u:v=\lcm(u,v)/v=x_1$. Hence $1\in\set_{I_c(G)}(u)$. Therefore $[n-2]\subset\bigcup_{u\in\mathcal{G}(I_c(G))}\set_{I_c(G)}(u)$. If the inclusion was not an equality, then Lemma \ref{Lem:I^k-lin-quot}(a) would imply that $\bigcup_{u\in\mathcal{G}(I_c(G))}\set_{I_c(G)}(u)=[n-1]$. Then, Lemma \ref{Lem:I^k-lin-quot}(b) implies that for all $k\gg0$ large enough, there exists $v_k\in\mathcal{G}(I_c(G)^k)$ such that $\set_{I_c(G)^k}(v_k)=[n-1]$. Lemma \ref{Lem:I^k-lin-quot}(c) then implies that $\lim_{k\rightarrow\infty}\depth S/I_c(G)^k=0$ against the fact that this limit is equal to $b(G)=1$. Hence $\bigcup_{u\in\mathcal{G}(I_c(G))}\set_{I_c(G)}(u)=[n-2]$.
	\end{proof}

    The following lemma is needed to treat the case of connected non-bipartite graphs.

    \begin{Lemma}\label{Lem:removal}
            Let $G$ be a connected graph having a cycle $C$ such that $|V(G)|>|V(C)|$. Then, there exists $v\in V(G)\setminus V(C)$ such that $G\setminus\{v\}$ is connected.
        \end{Lemma}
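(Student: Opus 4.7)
The plan is to produce a spanning tree $T$ of $G$ that contains a Hamiltonian path of the cycle $C$, and then to remove a leaf of $T$ lying outside $V(C)$. This suffices, since deleting a leaf of a spanning tree of $G$ yields a spanning tree of the resulting vertex-deleted graph, so $G\setminus\{v\}$ remains connected for any such leaf $v$.

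To construct $T$, I would let $P$ be the path obtained from $C$ by removing one of its edges; thus $P$ is a connected subgraph on the vertex set $V(C)$ with $|V(C)|-1$ edges. Contracting $P$ to a single vertex in $G$ produces a connected graph (since $G$ is connected and $P$ is), so we may choose a spanning tree $T'$ of this contraction and lift its edges back to $G$. Then $T:=T'\cup P$ is a subgraph of $G$ with exactly $|V(G)|-1$ edges that spans $V(G)$ and is connected, hence a spanning tree of $G$ containing every edge of $P$.

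To find a leaf of $T$ outside $V(C)$, observe that every internal vertex of $P$ has degree at least two in $T$, namely its two neighbors along $P$, and so is not a leaf; hence the only vertices of $V(C)$ that can possibly be leaves of $T$ are the two endpoints of $P$. Every tree on at least two vertices has at least two leaves, so if $T$ had no leaf outside $V(C)$ then it would have exactly two leaves and would itself be a path. But $P\subseteq T$ is also a path, and if both leaves of $T$ coincided with the endpoints of $P$ then $T=P$, forcing $|V(G)|=|V(T)|=|V(C)|$, contradicting $|V(G)|>|V(C)|$. Thus some leaf $v$ of $T$ lies in $V(G)\setminus V(C)$, and $G\setminus\{v\}$ is connected. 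The only real obstacle is arranging the spanning tree so that its leaves are restricted in this way; once the Hamiltonian subpath of $C$ is built into $T$, the conclusion follows from a short counting argument on the number of leaves.
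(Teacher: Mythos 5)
Your proof is correct, but it takes a different route from the paper's. The paper starts from an \emph{arbitrary} spanning tree $T$ of $G$ and splits into two cases: if some leaf of $T$ lies outside $V(C)$, delete it (deleting a leaf of a spanning tree preserves connectivity, just as you argue); if instead all leaves of $T$ lie on $C$, the paper deletes an arbitrary vertex $w\in V(T)\setminus V(C)$ — which need not be a leaf — and shows $G\setminus\{w\}$ is connected by taking, for any two vertices $u,v$, the maximal path of $T$ through them (whose endpoints are leaves, hence on $C$) and rerouting around $w$ through the cycle $C$. You avoid this case analysis and the rerouting argument by building a \emph{special} spanning tree $T$ containing the Hamiltonian path $P=C$ minus an edge, and then showing by a leaf count that such a $T$ must have a leaf off $V(C)$: the internal vertices of $P$ are never leaves, so if all leaves lay on $C$ then $T$ would be a path with the same endpoints as $P$, forcing $T=P$ and $|V(G)|=|V(C)|$, a contradiction. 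Your approach buys a uniform argument (you always delete a leaf of a spanning tree, so connectivity of $G\setminus\{v\}$ is immediate) at the cost of the construction of $T$; the contraction-and-lift step is correct but could be streamlined by simply invoking the standard fact that any forest of a connected graph (here the path $P$) extends to a spanning tree. The paper's approach needs no special tree but pays for it with the Case 2 rerouting through $C$, which is the only genuinely delicate step there. Both arguments are elementary and complete.
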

        \begin{proof}
        Let $T$ be a spanning tree of $G$. Then $T$ has at least four vertices. Any leaf $w$ of $T$ is such that $G\setminus\{w\}$ is connected. We distinguish two cases.\smallskip

        \textbf{Case 1.} Suppose there exists a leaf $w\in V(T)$ such that $w\notin V(C)$. Then $G\setminus\{w\}$ is connected and $w\in V(G)\setminus V(C)$.\smallskip

        \textbf{Case 2.} Suppose that all leaves of $T$ belong to $V(C)$. Pick any $w\in V(T)\setminus V(C)$. We claim that $G\setminus\{w\}$ is connected. Let $u,v\in V(G)\setminus\{w\}$ be distinct vertices. Then $u,v\in V(T)$ and since $T$ is a tree, there is a path in $T$ from $u$ to $v$. Let $P:\ v_0,v_1,\dots,v_{r-1},v_r$ be a maximal path in $T$ which contains $u$ and $v$, with $\{v_i,v_{i+1}\}\in E(T)\subset E(G)$ for $i=0,\dots,r-1$. Then by the maximality of $P$, we have that $v_0,v_r$ are leaves of $T$. Let $0\le i<j\le r$ be such that $u=v_i$ and $v=v_j$. If $w\ne v_h$ for all $i+1\le h\le j-1$, then $u$ and $v$ are connected in $G\setminus\{w\}$ via the path $P$. Suppose that $w=v_h$ for some $i+1\le h\le j-1$. All the leaves of $T$ belong to the cycle $C$. Hence $v_0,v_r\in V(C)$ and this shows that $u$ and $v$ are connected by a path in $G\setminus\{w\}$. We conclude that $G\setminus\{w\}$ is connected.
        \end{proof}
	
	\begin{Proposition}\label{Prop:nonbip}
		Let $G$ be a connected non-bipartite graph on $n\ge3$ vertices. Then $\dstab\,I_c(G)\le n-2$, and $$\lim_{k\rightarrow\infty}\depth S/I_c(G)^k=0.$$
	\end{Proposition}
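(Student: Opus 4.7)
The plan is to prove the two assertions separately. The limit is immediate: since $G$ is connected and non-bipartite, $b(G)=0$, and combining Corollary~\ref{Cor:ell-I(G)} with \cite[Proposition~10.3.2]{HHBook} yields $\lim_{k\to\infty}\depth S/I_c(G)^k\le n-\ell(I_c(G))=b(G)=0$, so the limit equals zero.

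For the bound $\dstab\,I_c(G)\le n-2$, my aim is to produce a single $u\in\mathcal{G}(I_c(G)^{n-2})$ with $\set_{I_c(G)^{n-2}}(u)=[n-1]$, which forces $\depth S/I_c(G)^{n-2}=0$ by Lemma~\ref{Lem:I^k-lin-quot}(c). Throughout I fix a labeling of $V(G)$ making $G_r=G[r,\dots,n]$ connected for every $r$, so that $I_c(G)^k$ has linear quotients with respect to the lex order $x_1>\cdots>x_n$ and Lemma~\ref{Lem:I^k-lin-quot} applies. The argument is by induction on $n\ge 3$; the base case $n=3$ forces $G=K_3$ and $I_c(G)=\m$, where the claim is trivial.

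For the inductive step I distinguish two subcases. \emph{Non-cycle:} if $G$ is not an odd cycle, some odd cycle $C$ of $G$ has $|V(C)|<n$, and Lemma~\ref{Lem:removal} supplies $v\notin V(C)$ with $G\setminus\{v\}$ connected; since $C$ survives, $G\setminus\{v\}$ is still non-bipartite. Relabelling so that $v=1$ (compatibly with the connectivity of each $G_r$), the inductive hypothesis applied to $G\setminus\{1\}$ yields $w\in\mathcal{G}(I_c(G\setminus\{1\})^{n-3})$ with $\set(w)=\{2,\dots,n-1\}$; its extension $\tilde w:=x_1^{n-3}w\in\mathcal{G}(I_c(G)^{n-3})$ inherits this set, because any swap $w\mapsto x_iw/x_j$ witnessing $i\in\set(w)$ lifts via multiplication by $x_1^{n-3}$ to a swap $\tilde w\mapsto x_i\tilde w/x_j$ within $\mathcal{G}(I_c(G)^{n-3})$. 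Since $G\setminus\{1\}$ is connected on at least three vertices, vertex~$1$ has a neighbor $b$ with $\deg_G(b)\ge 2$, and then $1\in\set_{I_c(G)}(u_{\{1,b\}}^{(G)})$ because $b$ has a second neighbor $f\ne 1$ satisfying $f\ge 2>1$. Taking $u:=\tilde w\cdot u_{\{1,b\}}^{(G)}\in\mathcal{G}(I_c(G)^{n-2})$ and invoking Lemma~\ref{Lem:I^k-lin-quot}(b) and (a) then gives $\set(u)=[n-1]$.

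\emph{Odd cycle:} if $G=C_n$, the induction cannot be invoked since $C_n\setminus\{v\}$ is always bipartite, and I would handle this case directly. Under the cyclic labeling $e_i=\{i,i+1\}$ for $i\le n-1$ and $e_n=\{1,n\}$ (which makes each $G_r$ a path), I take $u:=u_{e_1}u_{e_2}\cdots u_{e_{n-2}}\in\mathcal{G}(I_c(C_n)^{n-2})$. Its edge-multiset $M$ has multi-degree $(1,2,\dots,2,1,0)$ on $V(C_n)$, and for each $i\in[n-1]$ the swap transferring one incidence from $i$ to $n$ admits a unique integer multiset solution $M'$, because the $C_n$-incidence matrix has determinant $\pm 2$ for odd $n$; the required non-negativity of the entries of $M'$ reduces to a short alternating-sum check that exploits the oddness of $n$. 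This odd-cycle case is the main obstacle of the proof: at level $k=1$ one has $n-1\notin\bigcup_u\set_{I_c(C_n)}(u)$, so the coordinate $n-1$ can only enter $\set$ at a higher power, which is precisely what the consecutive product $u_{e_1}\cdots u_{e_{n-2}}$ achieves—the "inductive piece plus one linear piece" strategy of the non-cycle subcase breaks down here, since no single generator of $I_c(C_n)$ contributes $n-1$ to $\set$.
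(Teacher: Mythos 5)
Your proposal is essentially correct, and it diverges from the paper in the two places worth noting. For the limit you invoke the Burch-type inequality $\lim_k\depth S/I_c(G)^k\le n-\ell(I_c(G))=b(G)=0$ via Corollary \ref{Cor:ell-I(G)} and \cite[Proposition 10.3.2]{HHBook}; the paper instead obtains the limit as a byproduct of proving $\depth S/I_c(G)^k=0$ for $k\ge n-2$, so your shortcut is legitimate but the dstab bound still carries all the weight. Your inductive step in the non-cycle case (delete a vertex off a short odd cycle using Lemma \ref{Lem:removal}, lift the full set from $H=G\setminus\{1\}$ by multiplying by a power of $x_1$, and adjoin one generator $u_{\{1,b\}}$ contributing $1$ to the $\set$) is exactly the paper's inductive step; the paper organizes the induction on $t=|V(G)|-|V(C)|$ for an induced odd cycle $C$ rather than on $n$ with a cycle/non-cycle dichotomy, but the content is the same. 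The genuine difference is the pure odd-cycle case: the paper does not use the $\set$ machinery there at all, but shows directly that $\m\in\Ass\,I_c(C_n)^s$ for $s=(n-1)/2$ via the witness $u=(x_1\cdots x_n)^{s-1}$ (so $I_c(C_n)^s:(u)=\m$), and then uses that the depth function is non-increasing by \cite[Theorem 4.1]{FM1}; this is shorter, stays at the much lower power $s$, and avoids any incidence-matrix computation. Your alternative — exhibiting $u=u_{e_1}\cdots u_{e_{n-2}}\in\mathcal{G}(I_c(C_n)^{n-2})$ with $\set_{I_c(C_n)^{n-2}}(u)=[n-1]$ — is in fact true: I checked that for every $i\in[n-1]$ the degree vector $d-\epsilon_i+\epsilon_n$ (with $d=(1,2,\dots,2,1,0)$) is realized by an explicit nonnegative integer edge multiset, e.g.\ for even $i$ one can take $e_2,e_2,e_4,e_4,\dots,e_{i-2},e_{i-2},e_i,e_{i+1},\dots,e_{n-2},e_n$, and for odd $i$ take $e_1,\dots,e_{i-1},e_{i+1},e_{i+1},e_{i+3},e_{i+3},\dots,e_{n-3},e_{n-3},e_{n-1}$ (with the obvious adjustments for $i=1$ and $i=n-1$).

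Two points need to be filled in before this is a proof. First, in the odd-cycle case your justification is only a sketch: the determinant $\pm2$ of the incidence matrix of $C_n$ gives uniqueness of the rational solution, but by itself gives neither integrality nor nonnegativity, and the "short alternating-sum check" with its parity case distinction on $i$ is precisely the content that must be written out (or replaced by the explicit multisets above). Second, for $\dstab\,I_c(G)\le n-2$ you must show $\depth S/I_c(G)^k=0$ for \emph{all} $k\ge n-2$, not only $k=n-2$; this is a one-line fix, either by Lemma \ref{Lem:I^k-lin-quot}(b) (multiply your witness by $u_{e_1}^{k-(n-2)}$ and use (a) and (c)), or by the non-increasingness of the depth function from \cite[Theorem 4.1]{FM1}, which is how the paper concludes. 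With these repairs your argument is complete, at the cost of being somewhat longer than the paper's treatment of the odd cycle.
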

	\begin{proof}
		Let $G$ be a connected non-bipartite graph. By \cite[Lemma 9.1.1]{HHBook}, $G$ contains an induced odd cycle $C$. We prove the statement proceeding by induction on the integer $t=|V(G)|-|V(C)|\ge0$.
        
        For the base case, let $t=0$. Then $V(G)=V(C)$. Let $C=C_{2s+1}$ with $s\ge1$. Then we may assume that $V(G)=[2s+1]$ and $E(C)=\{\{1,2\},\dots,\{2s,2s+1\},\{2s+1,1\}\}$. We claim that $\m=(x_1,\dots,x_{2s+1})\in\Ass\,I_c(G)^s$. If $s=1$, then $I_c(G)=(x_1,x_2,x_3)=\m$ and so $\depth S/I_c(G)^k=0$ for all $k\ge 1$, as desired. Now, let $s\ge2$, and put $u=(x_1\cdots x_{2s+1})^{s-1}$. Notice that
		$$
		x_1u\ =\ \prod_{i=1}^s\big(\frac{x_1x_2\cdots x_{2s+1}}{x_{2i}x_{2i+1}}\big)\in I_c(G)^s.
		$$
		By symmetry, we have $x_iu\in I_c(G)^s$ for all $1\le i\le 2s+1$. Hence $\m\subset I_c(G)^s:(u)$. On the other hand, $u\notin I_c(G)^s$ because $\deg(u)=(2s+1)(s-1)<(2s-1)s$ and $I_c(G)^s$ is generated in degree $(2s-1)s$. Hence $I_c(G)^s:(u)=\m$. This shows that $\depth S/I_c(G)^s=0$. By \cite[Theorem 4.1]{FM1}, the depth function $k\mapsto\depth S/I_c(G)^k$ is non-increasing. That is $\depth S/I_c(G)^k\ge\depth S/I_c(G)^{k+1}$ for all $k\ge1$. Hence, $\depth S/I_c(G)^k=0$ for all $k\ge s$, and in particular for all $k\ge n-2=2s-1$.\smallskip
		
		Now, suppose that $t\ge1$. By Lemma \ref{Lem:removal}, there exists a vertex $j\in V(G)\setminus V(C)$ such that $G\setminus\{j\}$ is connected. Up to relabeling, $j=1$. Then, we can determine an order of the vertices $1,2,\dots,n$ of $G$ such that $G\setminus\{1,2,\dots,i\}$ is connected for all $i$ (see \cite[Proof of Theorem 3.1(a)]{FM}). Let $H=G\setminus\{1\}$. By \cite[Remark 3.3]{FM}, $I_c(G)^k$ and $I_c(H)^k$ have linear quotients with respect to the lexicographic monomial order induced by $x_1>\dots>x_{n}$, for all $k\ge1$. Since $C$ is contained in $H$ and $H$ is connected, by induction we have $\depth\,S/I_c(H)^k=0$ for all $k\ge|V(H)|-2=n-3$. Using Lemma \ref{Lem:I^k-lin-quot}(c), this means that for all $k\ge n-3$, there exists a monomial $v_k\in\mathcal{G}(I_c(H)^{k})$ such that $\set_{I_c(H)^{k}}(v_k)=\{2,3,\dots,n-1\}$. Notice that $w_k=x_1^{k}v_k\in\mathcal{G}(I_c(G)^{k})$ and clearly $\set_{I_c(G)^{k}}(w_k)$ contains $\{2,3,\dots,n-1\}$. We have $\{1,p\}\in E(G)$ for some $p>1$. Since $H$ is connected, we also have $\{p,q\}\in E(G)$ for some $q>1$ with $p\ne q$. Notice that ${\bf x}_{[n]}/(x_px_q)>_{\lex}{\bf x}_{[n]}/(x_1x_q)$ and setting $u={\bf x}_{[n]}/(x_1x_q)$ we have $1\in\set_{I_c(G)}(u)$ because $x_1(u/x_p)={\bf x}_{[n]}/(x_px_q)>_{\lex}u$. Now, using Lemma \ref{Lem:I^k-lin-quot}(a)-(b), we see that $\set_{I_c(G)^{k+1}}(uw_k)=[n-1]$ for all $k\ge n-3$. Lemma \ref{Lem:I^k-lin-quot}(c) shows that $\depth\,S/I_c(G)^k=0$ for all $k\ge n-2$. Hence $\dstab\,I_c(G)\le n-2$.
	\end{proof}
	
	Now, we are in the position to prove Theorem \ref{Thm:limitdepth-Ic(G)}.
	\begin{proof}[Proof of Theorem \ref{Thm:limitdepth-Ic(G)}]
		Let $j\in V(G)$ be an isolated vertex of $G$ and $H=G\setminus\{j\}$. Then $I_c(G)^k=x_j^kI_c(H)^k$. Suppose that the statements hold for $H$. Then,
		\begin{align*}
			\lim_{k\rightarrow\infty}\depth S/I_c(G)^k\ &=\ \lim_{k\rightarrow\infty}\depth S/I_c(H)^k\\
			&=\ \lim_{k\rightarrow\infty}\depth K[x_i:i\in V(H)]/I_c(H)^k+1\\[2pt]
			&=\ b(H)+1\ =\ b(G),
		\end{align*}
		where we used that $b(G)=b(H)+1$ (the component $\{v\}$ consisting of an isolated vertex is bipartite). Notice moreover, that $\dstab\,I_c(G)=\dstab\,I_c(H)$. Since also $c(G)=c(H)$, we have $|V(G)|-c(G)-1>|V(H)|-c(H)-1$. So we may assume that $G$ does not contain isolated vertices.
		
		Now, we proceed by induction on $c(G)$. If $c(G)=1$, then $G$ is connected. In this case, the assertion holds by Propositions \ref{Prop:dstab-conn} and \ref{Prop:nonbip}.
		
		Next, suppose now $c(G)>1$, and write $G=G_1\sqcup G_2$ with $G_2$ a connected graph. Identifying the variables of $S$ with the vertices of $G$, we may assume that $V(G_1)=\{x_1,\dots,x_n\}$ and $V(G_2)=\{y_1,\dots,y_m\}$. Let $S_1=K[x_1,\dots,x_n]$ and $S_2=K[y_1,\dots,y_m]$. Then $S=S_1\otimes_K S_2$. Moreover, we put
        $$
            I_1=({\bf x}_{[n]}/(x_ix_j):\{x_i,x_j\}\in E(G_1)),\quad
            I_2=({\bf y}_{[m]}/(y_iy_j):\{y_i,y_j\}\in E(G_2)).
        $$
        Since $c(G_1),c(G_2)<c(G)$, by induction we have
		\begin{align}
			\label{eq:depth1}\depth S_1/I_1^k\ &=\ b(G_1),\quad \textup{for all}\ k\ge n-c(G),\\
			\label{eq:depth2}\depth S_2/I_2^k\ &=\ b(G_2),\quad \textup{for all}\ k\ge m-2,
		\end{align}
		where we used that $c(G_1)=c(G)-1$ and $c(G_2)=1$.
		
		Let $I=I_c(G)$. The proof of \cite[Theorem 4.1]{FM1} shows that
		\begin{align}
			\label{eq:depth3}\depth\frac{S}{I^k}=\min&\left\{\substack{\displaystyle\!\depth S_1/I_1^k\!+m,\depth S_2/I_2^k\!+n,\depth S_2/I_2^{k-1}\!+n-1,\\[5pt]\displaystyle\min_{0<h<k}\{\depth S_1/I_1^{k-h}+\depth S_2/I_2^h\}}\right\},
		\end{align}
		for all $k\ge1$. Recall that by \cite[Theorem 4.1]{FM1}, each depth function appearing in the above formula is non-increasing. That is,
		$$
		\depth S_i/I_i^k\ge\depth S_i/I_i^{k+1},\quad\textup{for all}\ k\ge1,\ \textup{and}\ i=1,2.
		$$
		Combining these inequalities with the formulas (\ref{eq:depth1}), (\ref{eq:depth2}) and (\ref{eq:depth3}), it follows that
		$$
		\depth S/I^k\ge b(G_1)+b(G_2)=b(G),
		$$ for all $k\ge1$.
		On the other hand, let $k\ge n+m-c(G)-1$, and $h=k-(n-c(G))$. Then, $k-h=n-c(G)$, $h\ge m-1>m-2$, $0<h<k$. So the formulas (\ref{eq:depth1}), (\ref{eq:depth2}) and (\ref{eq:depth3}) imply that $$\depth S/I^k\le\depth S_1/I_1^{k-h}+\depth S_2/I_2^h=b(G_1)+b(G_2)=b(G),$$
	for all $k\ge n+m-c(G)-1=|V(G)|-c(G)-1$. Hence, inequality holds for all $k\ge|V(G)|-c(G)-1$. This shows that $\lim_{k\rightarrow\infty}\depth S/I_c(G)^k=b(G)$ and that $\dstab\,I_c(G)\le|V(G)|-c(G)-1$.
	\end{proof}
	
	The bound for $\dstab\,I_c(G)$ given in Theorem \ref{Thm:limitdepth-Ic(G)} is sharp. Indeed, we have
	\begin{Proposition}\label{Prop:Pn}
		Let $G=P_n$ be the path graph on $n\ge3$ vertices. Then
            \begin{equation}\label{eq:depthP_n}
               \depth\,S/I_c(G)^k\ =\ \begin{cases}
                n-k-1&\textup{for}\ 1\le k\le n-3,\\
                \hfil1&\textup{for}\ k\ge n-2.
                \end{cases}
            \end{equation}
            In particular, $\textup{dstab}(I_c(P_n))=n-2$.
	\end{Proposition}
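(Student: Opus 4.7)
The plan is to compute the depth via the linear quotients formula. Since $I_c(P_n)^k$ has linear quotients with respect to the lex order induced by $x_1>\cdots>x_n$ for every $k\ge1$ (by \cite[Theorem 3.1(b)]{FM} and \cite[Remark 3.3]{FM}), combining \cite[Corollary 8.2.2]{HHBook} with the Auslander-Buchsbaum formula yields
$$
\depth S/I_c(P_n)^k\ =\ n-1-\max_{u\in\mathcal{G}(I_c(P_n)^k)}|\set_{I_c(P_n)^k}(u)|.
$$
So it suffices to prove that $\max_u|\set_{I_c(P_n)^k}(u)|=\min(k,n-2)$. Writing the generators as $u_i={\bf x}_{[n]}/(x_ix_{i+1})$, a direct inspection gives $\set_{I_c(P_n)}(u_i)=\{i\}$ for $1\le i\le n-2$ and $\set_{I_c(P_n)}(u_{n-1})=\emptyset$.

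For the inequality ``$\ge$'', I take $u^{\ast}=u_1u_2\cdots u_{\min(k,n-2)}\cdot u_1^{\max(k-(n-2),0)}\in\mathcal{G}(I_c(P_n)^k)$ and apply Lemma~\ref{Lem:I^k-lin-quot}(b) iteratively to obtain $\set_{I_c(P_n)^k}(u^{\ast})\supseteq\{1,2,\ldots,\min(k,n-2)\}$.

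For the inequality ``$\le$'', the main claim is that for every $u=\prod_{i=1}^{n-1}u_i^{f_i}\in\mathcal{G}(I_c(P_n)^k)$ (with $\sum_if_i=k$),
$$
\set_{I_c(P_n)^k}(u)\ \subseteq\ \{i\in[n-2]:f_i\ge1\},
$$
which immediately gives $|\set(u)|\le\min(k,n-2)$. To establish the claim, let $j\in\set(u)$; then there exist $\ell$ and $u'=\prod u_i^{g_i}\in\mathcal{G}(I_c(P_n)^k)$ with $u'=x_ju/x_\ell>_{\lex}u$. Setting $h_i=g_i-f_i$, the identity $\prod u_i^{h_i}=x_j/x_\ell$ translates, by reading off the exponent of each $x_t$, into the recurrence
$$
h_{t-1}+h_t\ =\ [t=\ell]-[t=j],\qquad t=1,\ldots,n,
$$
with $h_0=h_n=0$. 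Solving it forces $j\equiv\ell\pmod 2$ and yields $h_t=0$ for $t$ outside the interval $[\min(j,\ell),\max(j,\ell)-1]$, with values alternating between $-1$ and $+1$ inside. The lex condition $u'>_{\lex}u$ forces $j<\ell$, since otherwise the first differing exponent (at position $\ell<j$) would be negative. Then $h_j=-1$, so $g_j=f_j-1\ge0$ forces $f_j\ge1$. Finally, the parity constraint together with $\ell\in[n]$ and $j<\ell$ excludes $j\in\{n-1,n\}$, so $j\in[n-2]$.

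Combining the two inequalities yields $\max_u|\set(u)|=\min(k,n-2)$, and hence $\depth S/I_c(P_n)^k=\max(n-k-1,1)$, which is (\ref{eq:depthP_n}). The stability statement is then immediate: the depth strictly decreases from $2$ at $k=n-3$ to $1$ at $k=n-2$ and remains $1$ thereafter (also consistent with Proposition~\ref{Prop:dstab-conn}), so $\dstab\,I_c(P_n)=n-2$. The main technical obstacle is the recurrence bookkeeping for $h_i$; a slicker reformulation as alternating walks in $P_n$ might be possible, but the direct recurrence seems the most concise route.
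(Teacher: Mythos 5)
Your proof is correct, but it follows a genuinely different route from the paper. The paper argues by induction on $n$: it writes $I_c(P_n)=x_1I_c(P_{n-1})+(x_3\cdots x_n)$, shows that the resulting filtration of $I_c(P_n)^k$ consists of Betti splittings (via the splitting criterion of Francisco--H\`a--Van Tuyl), computes the relevant intersections, and extracts a recursion for $\depth S/I_c(P_n)^k$; the stable value $1$ for $k\ge n-2$ is imported from Theorem~\ref{Thm:limitdepth-Ic(G)} (ultimately via Proposition~\ref{Prop:dstab-conn}, which uses normality and Cohen--Macaulayness of the Rees algebra). You instead work entirely inside the linear-quotients framework of Lemma~\ref{Lem:I^k-lin-quot}: you prove $\max_u|\set_{I_c(P_n)^k}(u)|=\min(k,n-2)$, with the lower bound coming from Lemma~\ref{Lem:I^k-lin-quot}(b) applied to $u_1\cdots u_{\min(k,n-2)}$, and the upper bound from the exponent recurrence $h_{t-1}+h_t=[t=\ell]-[t=j]$, which correctly forces $j<\ell$, $j\equiv\ell\pmod 2$ (so $j\le n-2$) and $f_j\ge1$; I checked the bookkeeping and it is sound. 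This buys you a self-contained combinatorial proof of the entire depth function, including the stable value $1$ and the sharp containment $\set_{I_c(P_n)^k}(u)\subseteq[n-2]$, without invoking Betti splittings or the Rees-algebra machinery, at the cost of the path-specific exponent analysis. Two routine points you use implicitly and should state: every element of $\mathcal{G}(I_c(P_n)^k)$ is a product of exactly $k$ generators of $I_c(P_n)$ (since the ideal is equigenerated, so degrees force $\sum_i f_i=\sum_i g_i=k$), and consequently $\sum_i h_i=0$, which is what turns the exponent comparison into the recurrence you write down; also the trivial case $n=3$ of the dstab claim (where the range $1\le k\le n-3$ is empty) deserves a word.
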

	\begin{proof}
		Since the order $1,\dots,n$ has obviously the property that $G_r=G[r,\dots,n]$ is connected for all $r=1,\dots,n$, by \cite[Theorem 3.1 and Remark 3.3]{FM}, $I_c(G)^k$ has linear quotients for all $k\ge1$, with respect to the lexicographic order $>_{\lex}$ induced by $x_1>\dots>x_n$. By Theorem \ref{Thm:limitdepth-Ic(G)}, $\depth S/I_c(G)^k=1$ for all $k\ge n-2$. So we may assume that $1\le k\le n-3$.  We prove by induction on $n$ that $\depth\,S/I_c(G)^k=n-k-1$. For the base case $n=3$ there is nothing to prove. Now, let $n>3$ and set $H=G\setminus\{1\}$. Then $H$ is a path on $n-1$ vertices and so, by induction on $n$, we have $\depth S/I_c(H)^k=n-k-2$ for $1\le k\le n-4$ and $\depth S/I_c(H)^k=1$ for $k\ge n-3$. We can write $I_c(G)=x_1I_c(H)+(x_3\cdots x_n)$. We put $v=x_3\cdots x_n$. Then $I_c(G)^k=\sum_{\ell=0}^k x_1^{k-\ell}v^\ell I_c(H)^{k-\ell}$, for all $k\ge1$. We claim that
		\begin{equation}\label{eq:BS}
			J_h\ =\ (\sum_{\ell=0}^{h-1}x_1^{k-\ell}v^\ell I_c(H)^{k-\ell})+x_1^{k-h}v^h I_c(H)^{k-h}
		\end{equation}
		is a Betti splitting for all $h=1,\dots,k$.
        
        To this end, it is clear that $\mathcal{G}(J_h)$ is the disjoint union of $\mathcal{G}(\sum_{\ell=0}^{h-1}x_1^{k-\ell}v^\ell I_c(H)^{k-\ell})$ and $\mathcal{G}(x_1^{k-h}v^h I_c(H)^{k-h})$, because the monomials in these two sets all have degree $(n-2)k$, but they have different $x_1$-degree. Since each power of $I_c(G)$ and $I_c(H)$ has linear quotients with respect to the order $>_{\lex}$, we see that both the ideals $J_{h-1}=\sum_{\ell=0}^{h-1}x_1^{k-\ell}v^\ell I_c(H)^{k-\ell}$ and $x_1^{k-h}v^h I_c(H)^{k-h}$ have linear quotients, and therefore linear resolution. By \cite[Corollary 2.4]{FHT}, it follows that (\ref{eq:BS}) is indeed a Betti splitting.
		
		Next, we compute the intersection
		\begin{align*}
			J_{h-1}\cap(x_1^{k-h}v^h I_c(H)^{k-h})&=\sum_{\ell=0}^{h-1}[(x_1^{k-\ell}v^\ell I_c(H)^{k-\ell})\cap(x_1^{k-h}v^h I_c(H)^{k-h})]\\
			&=\sum_{\ell=0}^{h-1}x_1^{k-\ell}v^h I_c(H)^{k-h}= (x_1^k,x_1^{k-1},\dots,x_1^{k-h+1})v^hI_c(H)^{k-h}\\[2pt]
			&=x_1^{k-h+1}v^hI_c(H)^{k-h}.
		\end{align*}
		In the above equalities, we used that $v^hI_c(H)^{k-h}\subset v^{h-1}I_c(H)^{k-(h-1)}\subset\cdots\subset I_c(H)^k$. This follows because $v=x_3\cdots x_n=x_3(x_2x_3\cdots x_n)/(x_2x_3)\in I_c(H)$.
		
		Since (\ref{eq:BS}) is a Betti splitting, and $J_k=I_c(G)^k$, the above computations show that
		\begin{align*}
			\depth S/I_c(G)^k\ &=\ \min\{\depth S/J_{k-1},\,\depth S/(v^k),\,\depth S/(x_1v^k)-1\}\\
			&=\ \min\{\depth S/J_{k-1},\,n-2\}.
		\end{align*}
		Now, let $R=K[x_2,\dots,x_n]$. Recall that $\depth S/(fJ)=\depth S/J$ for any ideal $J\subset S$ and any $f\in S$. Iterating the above computations to $J_{k-1},\dots,J_1$, and using that $x_1$ does not divide any minimal monomial generator of $I_c(H)$, we then see that
		\begin{align*}
			\,\depth S/I_c(G)^k\ =\ \min\bigg\{\substack{\displaystyle\depth R/I_c(H)^k+1,\,n-2,\\[4pt]
				\displaystyle\min_{0<h<k}\{\depth R/I_c(H)^{h}\}}\bigg\}.
		\end{align*}
		Since $\depth R/I_c(H)^{k}=n-k-2$ for $1\le k\le n-4$ and $\depth S/I_c(H)^k=1$ for $k\ge n-3$, the above formula implies that (\ref{eq:depthP_n}) holds.
	\end{proof}
    
    By \cite[Proposition 10.3.2]{HHBook} and Corollary \ref{Cor:ell-I(G)}, if $\mathcal{R}(I_c(G))$ is Cohen-Macaulay, then $$
	\lim_{k\rightarrow\infty}\depth S/I_c(G)^k\ =\ |V(G)|-\ell(I_c(G))\ =\ b(G).
	$$
    In view of this fact, Theorem \ref{Thm:limitdepth-Ic(G)}, and several experimental evidence, we are tempted to conclude the paper by posing the following conjecture.
	\begin{Conj}\label{Conj:R(Ic(G))-CM}
		Let $G$ be a finite simple graph. Then $\mathcal{R}(I_c(G))$ is Cohen-Macaulay.
	\end{Conj}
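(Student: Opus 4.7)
The plan is to prove the conjecture in three stages: a reduction to connected graphs without isolated vertices, an appeal to normality when $G$ satisfies the odd cycle condition, and a more delicate analysis when it does not.

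First, if $j\in V(G)$ is isolated, then $x_j$ divides every minimal generator of $I_c(G)$, so $I_c(G)=x_j\cdot I_c(G\setminus\{j\})S$ and $\mathcal{R}(I_c(G))\cong\mathcal{R}(I_c(G\setminus\{j\}))[x_j]$, a polynomial extension; Cohen--Macaulayness is thus inherited from the subgraph. For a disjoint decomposition $G=G_1\sqcup G_2$ with neither component isolated, writing the variables as in the proof of Theorem~\ref{Thm:limitdepth-Ic(G)}, one has $I_c(G)={\bf y}_{[m]}I_1+{\bf x}_{[n]}I_2$, which is a Betti splitting of $I_c(G)$. I would lift this to a short exact sequence of Rees modules and combine it with the depth lemma, working by induction on $c(G)$ to reduce to the connected case.

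Second, if $G$ is connected and satisfies the odd cycle condition, then by Theorem~\ref{Thm:Fiber-IcG} the Rees algebra $\mathcal{R}(I_c(G))$ is a normal toric $K$-algebra, and Hochster's theorem gives Cohen--Macaulayness; this already covers every connected bipartite graph. In every remaining case, the equality $\lim_{k\to\infty}\depth S/I_c(G)^k=n-\ell(I_c(G))$ proved in Theorem~\ref{Thm:limitdepth-Ic(G)} together with Corollary~\ref{Cor:ell-I(G)} is exactly the necessary condition for Cohen--Macaulayness coming from \cite[Proposition 10.3.2]{HHBook}, so the conjecture is at least consistent with what has already been established.

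The remaining case is when $G$ is connected but contains two vertex-disjoint odd cycles with no bridge between them, so $\mathcal{R}(I_c(G))$ fails to be normal. My strategy here is to use the isomorphism $\mathcal{F}(I_c(G))\cong K[G]$ and to lift Cohen--Macaulayness from the fiber cone to the Rees algebra via a sliding-depth or approximation-complex argument: first verify that the toric edge ring $K[G]$ is Cohen--Macaulay for every graph $G$, and then analyse the Koszul homology of the generators of $I_c(G)$ in order to show that the approximation complex $\mathcal{Z}_\bullet$ is acyclic with depth $n+1$. The main obstacle is this last step: Theorem~\ref{Thm:degx} bounds the $x$-degree of defining relations by $2$ but does not pin down the syzygy structure, and in the non-normal setting a careful combinatorial analysis driven by the vertex-disjoint odd cycles of $G$ seems unavoidable. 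A back-up plan, should sliding depth fail, is to construct an explicit $\mathcal{R}(I_c(G))$-regular sequence of length $n+1$ from the variables $x_i$ together with selected products $u_it$, with the choice of products dictated by the cycle structure of $G$.
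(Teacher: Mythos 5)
You should first note that the statement you are proving is posed in the paper as an open conjecture (Conjecture~\ref{Conj:R(Ic(G))-CM}); the paper itself only verifies it for graphs satisfying the odd cycle condition, by combining Theorem~\ref{Thm:Fiber-IcG} with Hochster's theorem on normal toric rings. Your second stage reproduces exactly that partial result, and your observation that Theorem~\ref{Thm:limitdepth-Ic(G)} together with Corollary~\ref{Cor:ell-I(G)} confirms the necessary condition from \cite[Proposition 10.3.2]{HHBook} is consistency evidence, not progress toward sufficiency. Everything beyond this is a plan with genuine gaps, so what you have written is not a proof.

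Two steps in particular would fail as stated. First, in the reduction $G=G_1\sqcup G_2$ you pass from the ideal-level decomposition $I_c(G)={\bf y}_{[m]}I_1+{\bf x}_{[n]}I_2$ (and a putative Betti splitting) to a ``short exact sequence of Rees modules'' yielding Cohen--Macaulayness of $\mathcal{R}(I_c(G))$ from that of the pieces. A Betti splitting controls the minimal free resolution of $I_c(G)$ over $S$, not the structure of $\mathcal{R}(I_c(G))$ as a ring; there is no exact sequence relating $\mathcal{R}(I_c(G))$ to $\mathcal{R}(I_1)$ and $\mathcal{R}(I_2)$ that comes from such a splitting, and unlike the fiber cone (which by Theorem~\ref{Thm:Fiber-IcG} is the edge ring $K[G]$ and does decompose over components), the Rees algebra of $I_c(G_1\sqcup G_2)$ is not a tensor product of the component Rees algebras --- indeed the mixed relations between generators coming from different components are precisely the interesting ones (they produce the $x$-degree $2$ relations of Theorem~\ref{Thm:degx}). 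Second, in the non-normal case your program begins with ``verify that the toric edge ring $K[G]$ is Cohen--Macaulay for every graph $G$,'' which is false: non-normal edge rings of graphs failing the odd cycle condition can have depth strictly less than dimension, so this intermediate claim cannot be salvaged; and even where $K[G]$ is Cohen--Macaulay, Cohen--Macaulayness does not lift from the fiber cone to the Rees algebra without substantial extra hypotheses. The sliding-depth/approximation-complex step and the ad hoc regular sequence are, by your own admission, unverified, so the hardest case of the conjecture remains untouched.
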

	
	This conjecture holds true for any graph $G$ satisfying the odd cycle condition, by combining Theorem \ref{Thm:Fiber-IcG} with \cite[Theorem B.6.2]{HHBook}.\medskip
	
	\textbf{Acknowledgment.} We thank Rafael H. Villarreal for his useful comments that helped us to greatly improve the quality of the manuscript. A. Ficarra was supported by the Grant JDC2023-051705-I funded by
	MICIU/AEI/10.13039/501100011033 and by the FSE+ and also by INDAM (Istituto Nazionale Di Alta Matematica).\bigskip

\end{document}